\newcommand{\C}{{\mathbb C}}
\newcommand{\R}{{\mathbb R}}
\newcommand{\mH}{\mathbb H}
\newcommand{\mL}{\mathbb L}
\newcommand{\mT}{\mathbb T}
\newcommand{\mV}{\mathbb V}
\newcommand{\mY}{\mathbb Y}
\newcommand{\mW}{\mathbb W}
\newcommand{\inpro}[2]{\left\langle{#1},{#2}\right\rangle}
\newcommand{\norm}[2]{\|{#1}\|_{#2}}
\newcommand{\bignorm}[2]{\Big\|{#1}\Big\|_{#2}}
\newcommand{\sig}{\sigma}
\newcommand{\lam}{\lambda}
\newcommand{\vecH}{\boldsymbol{H}}
\newcommand{\veca}{\boldsymbol{a}}
\newcommand{\vecb}{\boldsymbol{b}}
\newcommand{\vecm}{\boldsymbol{m}}
\newcommand{\vecn}{\boldsymbol{n}}
\newcommand{\vecu}{\boldsymbol{u}}
\newcommand{\vecv}{\boldsymbol{v}}
\newcommand{\vecw}{\boldsymbol{w}}
\newcommand{\vecx}{\boldsymbol{x}}
\newcommand{\vectau}{\boldsymbol{\tau}}
\newcommand{\vecpsi}{\boldsymbol{\psi}}
\newcommand{\vecphi}{\boldsymbol{\phi}}
\newcommand{\veczeta}{\boldsymbol{\zeta}}
\DeclareMathOperator{\dive}{{div \/}}
\DeclareMathOperator{\curl}{{curl \/}}
\newcommand{\cE}{{\cal E}}
\newcommand{\cM}{{\cal M}}
\newcommand{\cN}{{\cal N}}
\newcommand{\goto}{\rightarrow}
\newcommand{\p}{\partial}
\newcommand{\wtd}{\widetilde}
\newcommand{\ds}{\, ds}
\newcommand{\dt}{\, dt}
\newcommand{\dvx}{\, d\vecx}
\newcommand{\nn}{\nonumber}
\numberwithin{equation}{section}
\newtheorem{theorem}{Theorem}[section]
\newtheorem{lemma}[theorem]{Lemma}
\newtheorem{remark}[theorem]{Remark}
\newtheorem{definition}[theorem]{Definition}
\title{A convergent finite element approximation for
the quasi-static Maxwell--Landau--Lifshitz--Gilbert equations
\thanks{Supported by the Australian Research Council
under grant number DP120101886}
}
\author{Kim-Ngan Le
         \thanks{School of Mathematics and Statistics,
                 The University of New South Wales,
                 Sydney 2052, Australia.
                 Email:
                 {\tt n.le-kim@student.unsw.edu.au,
                 thanh.tran@unsw.edu.au}
               }
   \and T. Tran $^\dagger$} 
\newtheorem{algorithm}{Algorithm}[section]
\begin{document}
\maketitle
\pagenumbering{arabic}
\begin{abstract}
We propose a $\theta$-linear scheme for the numerical solution 
of the quasi-static Maxwell--Landau--Lifshitz--Gilbert (MLLG) 
equations. Despite the strong nonlinearity of the 
Landau--Lifshitz--Gilbert equation, the proposed method results 
in a linear system at each time step. We prove that as the 
time and space steps tend to zero (with no further 
conditions when $\theta\in(\frac{1}{2},1]$), the finite element 
solutions converge weakly to a weak solution of the 
MLLG equations. Numerical results are presented to show 
the applicability of the method. 

{\bf Key words}: Maxwell--Landau--Lifshitz--Gilbert, finite
element, ferromagnetism

{\bf AMS suject classifications}: 65M12, 35K55
\end{abstract}
\section{ Introduction}
The Maxwell--Landau--Lifshitz--Gilbert (MLLG)
equations describe the electromagnetic behavior of a 
ferromagnetic material. In this paper, for simplicity, we suppose that there is a bounded cavity $\wtd D\subset\R^3$ (with perfectly conducting outer surface $\p\wtd D$) into which a ferromagnet $D$ is embedded. We assume further that $\wtd D\backslash \bar{D}$ is a vacuum.
We will consider the quasi-static case of the MLLG system. Letting $D_T:= (0,T)\times D$ and $\wtd D_T:= (0,T)\times \wtd D$, the magnetization field $\vecm : D_T\goto \mathbb{S}^2$, where $\mathbb{S}^2$ is the unit sphere in $\R^3$, and the magnetic field $\vecH:\wtd D_T\goto \mathbb{R}^3$ satisfy
\begin{align}
&\vecm_t
=
\lambda_1\vecm\times\vecH_{\text{eff}}
-
\lambda_2\vecm\times(\vecm\times\vecH_{\text{eff}})
\quad\text{ in } D_T,\label{E:LL}\\
&\mu_0\vecH_t
+
\sig\nabla\times(\nabla\times\vecH) 
=
-\mu_0\wtd\vecm_t
\quad\text{ in } \wtd D_T,\label{E:Maxwell2}
\end{align}
in which $\lambda_1\not=0$, $\lambda_2 >0$, $\sigma\ge 0$, and $\mu_0> 0$ are constants. Here $\wtd\vecm:\wtd D_T\goto \R^3$ is the zero extension of $\vecm$ onto $\wtd D_T$, i.e.,
\begin{equation*}
\wtd\vecm(t,\vecx)
=
\begin{cases}
\vecm(t,\vecx),&\quad (t,\vecx)\in D_T
\\
\hfill 0, &\quad (t,\vecx)\in \wtd D_T\backslash D_T.
\end{cases}
\end{equation*}
For simplicity the effective field $\vecH_{\text{eff}}$ is taken to be $\vecH_{\text{eff}}=\Delta\vecm+\vecH$.

The system \eqref{E:LL}--\eqref{E:Maxwell2} is supplemented with initial conditions
\begin{equation}\label{InCond}
\vecm(0,.)=\vecm_0\text{ in }D\quad\text{and}\quad
\vecH(0,.)=\vecH_0\text{ in }\wtd D,
\end{equation}
and boundary conditions
\begin{equation}\label{Cond}
\partial_n\vecm=0 
\text{ on } (0,T)\times\partial D\quad\text{and}\quad
(\nabla\times\vecH)\times n =0 
\text{ on } (0,T)\times\partial \wtd D.
\end{equation}

The equation~\eqref{E:LL} is the first dynamical model for the precessional motion of a magnetization, suggested by Landau and Lifshitz~\cite{LL35}.
The existence and
uniqueness of a {\em local} strong solution 
of~\eqref{E:LL}--\eqref{Cond} is shown by Cimr{\'a}k~\cite{CimExist07}. He also
proposes~\cite{CimError07} a finite element method to
approximate this local solution and provides error
estimation. 

Gilbert introduces a different approach for description of damped precession in~\cite{Gil55}:
\begin{align}\label{E:LLG}
\lam_1\vecm_t + \lam_2 \vecm\times\vecm_t
=
\mu\vecm\times\vecH_{\text{eff}}, 
\end{align}
in which $\mu=\lam_1^2+\lam_2^2$.
A proof of the equivalence between \eqref{E:LLG} and \eqref{E:LL} can be found in ~\cite{LeTran_2012}.
It is easier to numerically solve~\eqref{E:LLG} than~\eqref{E:LL} because the latter has a double cross term, namely $\vecm\times(\vecm\times\vecH_{\text{eff}})$.

Instead of solving~\eqref{E:LL}--\eqref{Cond}, Ba\v{n}as, Bartels and Prohl~\cite{BBP08}
propose an implicit nonlinear scheme to solve
problem~\eqref{E:Maxwell2}--\eqref{E:LLG}, and prove
that the finite element solution converges to a weak
{\em global} solution of the problem. Their method
requires a condition on the time step $k$ and space step
$h$ (namely $k=O(h^2)$) for the convergence of the {\em
nonlinear} system of equations resulting from the
discretization.

Following the idea  developed by Alouges
and Jaison~\cite{Alo08} for the
Landau--Lifshitz--Gilbert (LLG) equation~\eqref{E:LLG}, 
we propose a $\theta$-{\em linear} finite
element scheme to find a weak global solution 
to~\eqref{E:Maxwell2}--\eqref{E:LLG}. We prove that the numerical solutions  converge to a weak solution of the problem with no condition imposed on time step and space step as $\theta\in(\frac{1}{2},1]$. 
It is required that  $k=o(h^2)$ when
$\theta\in[0,\frac{1}{2})$, and $k=o(h)$ when 
$\theta=\frac{1}{2}$. 
The implementation aspect of the algorithm is reported 
in~\cite{LeTran_2012} where no convergence analysis is
carried out.

The paper is organized as follows. Weak solutions of the 
MLLG equations are defined in Section \ref{Sec:Def}. We
also introduce in this section the $\theta$-linear finite 
element scheme. Some technical lemmas are presented in
Section \ref{Sec:Pre}. In Section \ref{Sec:WSo}, we
prove that the finite element solutions converge to a weak 
solution of the MLLG equations. 
Numerical experiments are presented in the last section.

\section{ Weak solutions and finite element schemes}\label{Sec:Def}
Before presenting the definition of a weak solution to the MLLG equations, it is necessary to introduce some function spaces and  to assume some  conditions on the initial functions $\vecm_0$ and $\vecH_0$.

The function spaces $\mH^1(D,\R^3)$ and $\mH(\curl;\wtd D)$ are defined as follows:
\begin{align*}
\mH^1(D,\R^3)
&=
\left\{ \vecu\in\mL^2(D,\R^3) : \frac{\p\vecu}{\p
x_i}\in \mL^2(D,\R^3)\quad\text{for } i=1,2,3.
\right\},\\
\mH(\curl;\wtd D)
&=
\left\{
 \vecu\in\mL^2(\wtd D,\R^3) : \nabla\times\vecu\in \mL^2(\wtd D,\R^3)
\right\}.
\end{align*}
Here, for a domain $\Omega\subset\R^3$, $\mL^2(\Omega,\R^3)$ is the usual space of Lebesgue squared
integrable functions defined on $\Omega$ and taking values in $\R^3$.
Throughout this paper, we denote
\begin{gather*}
\inpro{\cdot}{\cdot}_{\Omega}:=\inpro{\cdot}{\cdot}_{\mL^2(\Omega,\R^3)}
\quad\text{and}\quad
\|\cdot\|_{\Omega}:=\|\cdot\|_{\mL^2(\Omega,\R^3)}.
\end{gather*}

In order to define a weak solution of MLLG equations, 
we assume that the given functions $\vecm_0$ and $\vecH_0$ satisfy
\begin{equation}\label{E:Cond1}
\vecm_0\in\mH^1(D,\R^3),\quad
|\vecm_0|=1\text{\ a.e. in }D\quad\text{and}\quad
\vecH_0\in\mH(\curl;\wtd D).
\end{equation}
For physical reasons (see~\cite{Boling08}), these initial fields must satisfy
\begin{equation}\label{E:Cond2}
\dive (\vecH_0+\chi_D\vecm_0)=0\text{ in } \wtd D
\quad\text{and}\quad
(\vecH_0+\chi_D\vecm_0)\cdot\vecn = 0\text{ on }\partial \wtd  D.
\end{equation}

Since equations \eqref{E:LL} and \eqref{E:LLG} are equivalent (a proof of which can be found in~\cite{LeTran_2012}), instead of solving~\eqref{E:LL}--\eqref{Cond} we solve~\eqref{E:Maxwell2}--\eqref{E:LLG}. A weak solution of the problem is defined in the following definition.
\begin{definition}\label{Def:WeakSo1}
Let the initial data $(\vecm_0,\vecH_0)$ satisfy~\eqref{E:Cond1} and~\eqref{E:Cond2}. Then 
$(\vecm,\vecH)$ is call a weak solution to~\eqref{E:Maxwell2}--\eqref{E:LLG} if, for all $T>0$, there hold
\begin{enumerate}
\item
$
\vecm\in\mH^1(D_T,\mathbb R^3))\text{ and } |\vecm|=1
\text{ a.e. in } D_T;
$
\item
$
\vecH,\vecH_t,\nabla\times\vecH\in\mL^2(\wtd D_T,\mathbb R^3)
$;
\item
for all $\vecphi\in\mathbb C^{\infty}(D_T)$ and $\veczeta\in\mathbb C^{\infty}(\wtd D_T)$,
\begin{align}\label{wE:LLG2}
\lambda_1\inpro{\vecm_t}{\vecphi}_{D_T}
+
\lambda_2\inpro{\vecm\times\vecm_t}{\vecphi}_{D_T}
=
\mu\inpro{\nabla\vecm}{\nabla(\vecm\times\vecphi)}_{D_T}
+
\mu\inpro{\vecm\times\vecH}{\vecphi}_{D_T}
\end{align}
and
\begin{equation}\label{wE:Maxwell2}
\mu_0\inpro{\vecH_t}{\veczeta}_{\wtd D_T}
+\sig\inpro{\nabla\times\vecH}{\nabla\times\veczeta}_{\wtd D_T}
=
-\mu_0\inpro{\wtd\vecm_t}{\veczeta}_{\wtd D_T},
\end{equation}
where $\mu=\lambda_1^2+\lambda_2^2$;
\item
in the sense of traces there holds
\begin{equation}\label{equ:m0}
\vecm(0,\cdot)=\vecm_0, 
\end{equation}
\item for almost all $T'\in(0,T)$,
\begin{align}\label{InE:Energy}
\cE(T')
+\lambda_2\mu^{-1}\|\vecm_t\|^2_{D_{T'}}
+\lambda_2\mu^{-1}\|\vecH_t\|^2_{\wtd D_{T'}}
+2\mu_0^{-1}\sigma\|\nabla\times\vecH\|^2_{\wtd D_{T'}}
\leq
\cE(0),
\end{align}
where
\[
\cE(T')=
\|\nabla\vecm(T')\|^2_{D}
+\|\vecH(T')\|^2_{\wtd D}
+\lambda_2\mu^{-1}\mu_0^{-1}\sigma\|\nabla\times\vecH(T')\|^2_{\wtd D}.
\]
\end{enumerate}
\end{definition}

We next introduce the $\theta$-linear finite element scheme 
which approximates a weak solution $(\vecm,\vecH)$ 
defined in Definition~\ref{Def:WeakSo1}.

Let $\mT_h$ be a regular tetrahedrization of the domain $\wtd D$ into tetrahedra of maximal mesh-size $h$, 
and let $\mT_h|_D$ be  its restriction to $D\subset\wtd D$. We denote by $\cN_h := \{\vecx_1,\ldots,\vecx_N\}$ the set of vertices and by $\cM_h :=\{ e_1, \ldots , e_M \}$ the set of edges.

To discretize the LLG equation~\eqref{wE:LLG2}, we introduce the finite element space $\mV_h\subset\mH^1(D,\R^3)$ which is the space of all continuous piecewise linear functions on $\mT_h|_D$. A basis for $\mV_h$ can be chosen to be $(\phi_n)_{1\leq n\leq N}$, where
 $\phi_n(\vecx_m)=\delta_{n,m}.$  Here $\delta_{n,m}$ stands for
the Kronecker symbol.
The interpolation operator from
$\C^0(D,\R^3)$ onto  $\mV_h$ is denoted by $I_{\mV_h}$,
\[
I_{\mV_h}(\vecv)=\sum_{n=1}^N \vecv(\vecx_n)\phi_n(\vecx)
\quad\forall \vecv\in \mathbb C^0(D,\mathbb R^3) .
\]

To discretize Maxwell's equation~\eqref{wE:Maxwell2}, we
use the space $\mY_h$ of lowest order edge elements of 
Nedelec's first family~\cite{Monk03}. 
It is known~\cite{Monk03} that $\mY_h$ is a subspace of 
$\mH(\curl;\wtd D)$ and that the set
$\{\vecpsi_1,\ldots,\vecpsi_M\}$ is a basis for $\mY_h$
if it satisfies
\begin{equation}\label{equ:psiq}
\begin{aligned}
&\vecpsi_q
\in
\{\vecpsi : \wtd D\goto\R^3 \ | \
\vecpsi|_{K}(\vecx) = \veca_K+\vecb_K\times\vecx,
\ \veca_K,\vecb_K\in\R^3, \forall K\in\mT_h\}, \\
&\int_{e_p}\vecpsi_q\cdot\vectau_p\ds 
= 
\delta_{qp},
\end{aligned}
\end{equation}
where $ \vectau_p $ is the unit vector in the direction of edge $e_p$. 
We also define the following interpolation operator $I_{\mY_h}$ from $\C^{\infty}(\wtd D)$ onto $\mY_h$,
\[
I_{\mY_h}(\vecu)=\sum_{q=1}^M u_q \vecpsi_q
\quad\forall \vecu\in \mathbb C^{\infty}(\wtd D,\mathbb R^3),
\]
where
$
u_q=\int_{e_q}\vecu\cdot\vectau_q\ds.
$

Fixing a positive integer $J$, we choose the time step
$k$ to be $k=T/J$ and define $t_j=jk$, $j=0,\cdots,J$.  For $j=1,2,\ldots,J$,
the functions $\vecm (t_j,\cdot)$ and $\vecH(t_j,\cdot)$ are approximated by $\vecm^{(j)}_h\in\mV_h$ and $\vecH_h^{(j)}\in\mY_h$, respectively. 

We define the space $\mW^{(j)}_h$ by
\begin{equation*}
 \mW_h^{(j)}
 :=
 \left\{\vecw\in \mV_h \mid 
 \vecw(\vecx_n)\cdot\vecm_h^{(j)}(\vecx_n)=0,
 \ n = 1,\ldots,N \right\},
\end{equation*}
and denote 
\[
\vecH^{(j+1/2)}_h:=\frac{\vecH^{(j+1)}_h+\vecH^{(j)}_h}{2}
\quad\text{and}\quad
d_t\vecH^{(j+1)}_h
:=k^{-1}(\vecH^{(j+1)}_h-\vecH^{(j)}_h).
\]
\begin{algorithm}\label{Algo:1}
\mbox{}
\begin{description}
\item[Step 1:]
Set $j=0$.
Choose $\vecm^{(0)}_h=I_{\mV_h}\vecm_0$ and
$\vecH^{(0)}_h=I_{\mY_h}\vecH_0$.
\item[Step 2:] \label{A:2}
Find $(\vecv_h^{(j+1)},\vecH^{(j+1)}_h)\in
\mW_h^{(j)}\times\mY_h$ satisfying
\begin{align}\label{disE:LLG}
 &\lambda_2
 \inpro{\vecv_h^{(j+1)}}{\vecw_h^{(j)}}_{D}
 -\lambda_1
 \inpro{\vecm_h^{(j)}\times\vecv_h^{(j+1)}}{\vecw_h^{(j)}}_{D}\nn\\
 &=
 -\mu 
 \inpro{\nabla (\vecm_h^{(j)}+k\theta \vecv_h^{(j+1)})}
 {\nabla\vecw_h^{(j)}}_{D} 
 +
 \mu
 \inpro{\vecH^{(j+1/2)}_h}{\vecw_h^{(j)}}_{D}
 \quad\forall \vecw_h^{(j)}\in\mW_h^{(j)},
 \end{align}
and
\begin{align}\label{disE:NewMax2}
 \mu_0\inpro{d_t\vecH^{(j+1)}_h}{\veczeta_h}_{\wtd D}
 &+\sigma\inpro{\nabla\times\vecH^{(j+1/2)}_h}
 {\nabla\times\veczeta_h}_{\wtd D}\nn\\
 &=
 -\mu_0\inpro{\vecv^{(j+1)}_h}{\veczeta_h}_{\wtd D}
 \quad\forall \veczeta_h\in\mY_h.
\end{align}
\item[Step 3:] \label{A:4}
Define
\begin{equation*}
\vecm_h^{(j+1)}(\vecx)
:=
\sum_{n=1}^N
\frac{\vecm_h^{(j)}(\vecx_n)+k\vecv_h^{(j+1)}(\vecx_n)}
{\left|\vecm_h^{(j)}(\vecx_n)+k\vecv_h^{(j+1)}(\vecx_n)\right|}
\phi_n(\vecx).
\end{equation*}
\item[Step 4:] 
Set $j=j+1$, and return to Step~\ref{A:2} if $j<J$. Stop if $j=J$.
\end{description}
\end{algorithm}
\noindent
The parameter $\theta$ in~\eqref{disE:LLG} can be chosen arbitrarily in
$[0,1]$. The method is explicit when $\theta=0$ and 
fully implicit when $\theta=1$.

By the Lax--Milgram Theorem, for each $j>0$ 
there exists a unique solution 
$(\vecv_h^{(j+1)},\vecH^{(j+1)}_h)\in \mW_h^{(j)}\times\mY_h$ 
of equations~\eqref{disE:LLG}--\eqref{disE:NewMax2}.
Since $\left|\vecm_h^{(j)}(\vecx_n)\right|=1$ and 
$\vecv_h^{(j+1)}(\vecx_n)\cdot\vecm_h^{(j)}(\vecx_n)=0$ for all
$n=1,\ldots,N$ , there holds
\begin{equation}\label{InE:CondGr1}
\left
|\vecm_h^{(j)}(\vecx_n)+k\vecv_h^{(j+1)}(\vecx_n)\right|
\ge 1.
\end{equation}
Therefore, the algorithm is well defined. There also
holds $\left|\vecm_h^{(j+1)}(\vecx_n)\right|=1$ for $n=1,\cdots,N.$

\section{Some  technical lemmas }\label{Sec:Pre}
In this section we present some lemmas which will be used in the rest of the paper. We start by recalling the following lemma proved in~\cite{Bart05}.
\begin{lemma}\label{lem:4.0}
If there holds
\begin{equation}\label{E:CondTe}
\int_D \nabla\phi_i\cdot\nabla\phi_j\dvx \leq 0
\quad\text{for all}\quad i,j \in \{1,2,\cdots,J\}\text{ and } i\not= j ,
\end{equation}
then for all $\vecu\in\mV_h$ satisfying $|\vecu(\vecx_l)|\geq 1$, $ l=1,2,\cdots,J$, there holds
\begin{equation}\label{E:InE}
\int_D\left|\nabla I_h\left(\frac{\vecu}{|\vecu|}\right)\right|^2\dvx
\leq
\int_D|\nabla\vecu|^2\dvx.
\end{equation}
\end{lemma}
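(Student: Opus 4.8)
The plan is to reduce the inequality to a pointwise/elementwise estimate on each tetrahedron and then sum. The key observation is that for a piecewise-linear function $\vecu\in\mV_h$, the gradient on a simplex $K$ with vertices $\vecx_{i_0},\ldots,\vecx_{i_3}$ can be written in terms of the nodal values and the gradients of the barycentric coordinate functions. Writing $\vecu_l:=\vecu(\vecx_{i_l})$ and noting that the basis functions satisfy $\sum_l \nabla\phi_{i_l}=0$ on $K$, one gets the well-known identity
\begin{equation*}
\int_K |\nabla\vecu|^2\dvx
=
-\frac12\sum_{l\neq m} \left(\int_K \nabla\phi_{i_l}\cdot\nabla\phi_{i_m}\dvx\right)
\,|\vecu_l-\vecu_m|^2 .
\end{equation*}
The same identity holds with $\vecu$ replaced by $I_h(\vecu/|\vecu|)$, whose nodal values are $\vecu_l/|\vecu_l|$. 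So it suffices to prove the pointwise inequality $\bigl|\vecu_l/|\vecu_l| - \vecu_m/|\vecu_m|\bigr| \le |\vecu_l-\vecu_m|$ whenever $|\vecu_l|,|\vecu_m|\ge 1$, and then invoke the sign hypothesis~\eqref{E:CondTe} to conclude that replacing each $|\vecu_l-\vecu_m|^2$ by the smaller quantity $\bigl|\vecu_l/|\vecu_l| - \vecu_m/|\vecu_m|\bigr|^2$ can only decrease (the negative of) the sum, term by term.

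First I would establish the elementary vector inequality: for $\veca,\vecb\in\R^3$ with $|\veca|\ge 1$ and $|\vecb|\ge 1$,
\begin{equation*}
\left|\frac{\veca}{|\veca|}-\frac{\vecb}{|\vecb|}\right|
\le
|\veca-\vecb|.
\end{equation*}
This follows from a short computation: writing $\alpha=|\veca|$, $\beta=|\vecb|$, one expands both squared norms; the left side equals $2-2\,\veca\cdot\vecb/(\alpha\beta)$ and the right equals $\alpha^2+\beta^2-2\,\veca\cdot\vecb$, and the difference $|\veca-\vecb|^2 - \bigl|\veca/|\veca|-\vecb/|\vecb|\bigr|^2 = (\alpha^2-1)+(\beta^2-1) + 2\,\veca\cdot\vecb\,\bigl(1/(\alpha\beta)-1\bigr)$. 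Using Cauchy--Schwarz $\veca\cdot\vecb\le\alpha\beta$ together with $1/(\alpha\beta)-1\le 0$ (since $\alpha\beta\ge1$) bounds the last term below by $2\alpha\beta(1/(\alpha\beta)-1) = 2 - 2\alpha\beta$, and then $(\alpha^2-1)+(\beta^2-1)+2-2\alpha\beta = (\alpha-\beta)^2\ge 0$ finishes it. (One must also handle the Cauchy--Schwarz step when $\veca\cdot\vecb<0$, but then the term is already nonnegative, so the bound is immediate.)

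Next I would assemble the global estimate. Apply the identity above on each $K\in\mT_h|_D$ to both $\vecu$ and $I_h(\vecu/|\vecu|)$; the coefficients $c^K_{lm}:=\int_K\nabla\phi_{i_l}\cdot\nabla\phi_{i_m}\dvx$ are the same in both expansions. Summing over all elements regroups these into the global coefficients $\int_D\nabla\phi_i\cdot\nabla\phi_j\dvx$ for $i\neq j$, which by hypothesis~\eqref{E:CondTe} are $\le 0$. Since for each pair the factor $|\vecu(\vecx_i)-\vecu(\vecx_j)|^2$ dominates $\bigl|\tfrac{\vecu(\vecx_i)}{|\vecu(\vecx_i)|}-\tfrac{\vecu(\vecx_j)}{|\vecu(\vecx_j)|}\bigr|^2$ and is multiplied by a nonpositive number, term-by-term comparison gives~\eqref{E:InE}.

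The main obstacle — or rather the only delicate point — is the bookkeeping in passing from the elementwise coefficients $c^K_{lm}$ to the global ones: one needs the sign condition~\eqref{E:CondTe} to hold \emph{after} summation over elements sharing an edge, not elementwise (indeed $c^K_{lm}$ may be positive on an individual obtuse tetrahedron). Care is needed to check that the regrouping is valid, i.e. that $\sum_{K}(-\tfrac12)c^K_{lm}|\vecw_l-\vecw_m|^2$ really does reorganize into $\sum_{i<j}\bigl(-\int_D\nabla\phi_i\cdot\nabla\phi_j\bigr)|\vecw(\vecx_i)-\vecw(\vecx_j)|^2$, which it does because $\phi_i$ restricted to $K$ is either one of the barycentric coordinates or identically zero. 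Everything else is routine.
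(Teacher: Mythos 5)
Your proof is correct, and it is essentially the standard argument for this result: the paper itself does not prove the lemma but simply recalls it from Bartels~\cite{Bart05}, where the proof proceeds exactly as you do, via the quadratic-form identity $\int_D|\nabla w_h|^2\dvx=-\tfrac12\sum_{i\neq j}\bigl(\int_D\nabla\phi_i\cdot\nabla\phi_j\dvx\bigr)\,|w_h(\vecx_i)-w_h(\vecx_j)|^2$ (a consequence of the partition of unity $\sum_i\phi_i\equiv 1$) combined with the nonexpansiveness estimate $\bigl|\veca/|\veca|-\vecb/|\vecb|\bigr|\le|\veca-\vecb|$ for $|\veca|,|\vecb|\ge1$ and the sign hypothesis~\eqref{E:CondTe}. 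Your handling of the only delicate point — that the sign condition is imposed on the globally assembled coefficients rather than elementwise — is also right.
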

\noindent
Condition~\eqref{E:CondTe} holds if all dihedral angles of 
the tetrahedra in $\mT_h|_D$ are less than or equal 
to $\pi/2$; see~\cite{Bart05}. 
In the sequel we assume that~\eqref{E:CondTe} holds.

The next lemma defines a discrete $\mL^p$-norm in
$\mV_h$ which is equivalent to the usual $\mL^p$-norm.
\begin{lemma}\label{lem:3.4}
There exist $h$-independent positive constants $C_1$ and $C_2$ such that for all $p\in[1,\infty]$ and $\vecu\in\mV_h$ there holds
\begin{equation*}
C_1\|\vecu\|^p_{\mL^p(\Omega)}
\leq
h^d
\sum_{n=1}^N |\vecu(\vecx_n)|^p
\leq
C_2\|\vecu\|^p_{\mL^p(\Omega)},
\end{equation*}
where $\Omega\subset\R^d$, d=1,2,3.
\end{lemma}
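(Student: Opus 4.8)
\textbf{Proof proposal for Lemma~\ref{lem:3.4}.}
The plan is to reduce the global estimate to a sum of local (element-by-element) estimates and then invoke a scaling argument to the reference tetrahedron. First I would fix an element $K\in\mT_h|_\Omega$ with vertices $\vecx_{n_1},\dots,\vecx_{n_{d+1}}$, and recall that on $K$ any $\vecu\in\mV_h$ is an affine function determined by its nodal values $\vecu(\vecx_{n_i})$. Mapping $K$ affinely onto the fixed reference simplex $\hat K$, the quantity $\int_K|\vecu|^p\dvx$ becomes $|\det B_K|\int_{\hat K}|\hat\vecu|^p\,d\hat\vecx$, where $B_K$ is the Jacobian of the affine map and $\hat\vecu$ is the pullback, which is again affine with the same nodal values. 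On the finite-dimensional space of affine functions on $\hat K$, both $\hat\vecu\mapsto\bigl(\int_{\hat K}|\hat\vecu|^p\,d\hat\vecx\bigr)^{1/p}$ and $\hat\vecu\mapsto\bigl(\sum_{i=1}^{d+1}|\hat\vecu(\hat\vecx_i)|^p\bigr)^{1/p}$ are norms (the latter because an affine function vanishing at all $d+1$ vertices is zero), hence equivalent with constants depending only on $d$; one must check that the equivalence constants can be taken uniform in $p\in[1,\infty]$, which follows since for any fixed finite-dimensional space all $\ell^p$-type norms on the coefficient vector are comparable with constants bounded by a power of the (fixed) dimension, uniformly in $p$ — the cleanest route is to compare everything to the $\ell^\infty$ norm of the nodal values.

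Next I would restore the scaling. Shape-regularity of $\mT_h$ gives $|\det B_K|\sim h_K^d\sim |K|$ with constants independent of $h$ and $K$, so $\int_K|\vecu|^p\dvx \sim h_K^d\sum_{i=1}^{d+1}|\vecu(\vecx_{n_i})|^p$, again uniformly in $p$. Summing over all $K\in\mT_h|_\Omega$, the left-hand side sums to $\|\vecu\|_{\mL^p(\Omega)}^p$. On the right-hand side, each vertex $\vecx_n$ is counted once for every element containing it; by shape-regularity the number of such elements is bounded by an $h$-independent constant $C_0$, and it is at least $1$, so
\begin{equation*}
\sum_{n=1}^N h^d|\vecu(\vecx_n)|^p
\;\le\;
\sum_{K}\sum_{i=1}^{d+1} h_K^d|\vecu(\vecx_{n_i})|^p
\;\le\;
C_0\sum_{n=1}^N h^d|\vecu(\vecx_n)|^p,
\end{equation*}
using also $h_K\le h$ and, in the reverse direction, $h_K\ge c\,h$ is \emph{not} needed — only the comparison $h_K^d\sim|K|$ is used, together with $h^d$ versus $h_K^d$ being comparable under quasi-uniformity. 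This chain of comparisons yields the claimed two-sided bound with $C_1,C_2$ depending only on $d$, the shape-regularity constant, and (through $h$ versus $h_K$) quasi-uniformity of the mesh.

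The only genuinely delicate point is the uniformity of all constants with respect to $p\in[1,\infty]$, including the endpoint $p=\infty$. For $p=\infty$ the statement reads $C_1\|\vecu\|_{\mL^\infty(\Omega)}^\infty \le h^d\sum|\vecu(\vecx_n)|^\infty\le\dots$, which is not literally meaningful; the intended reading (consistent with the use of this lemma elsewhere) is the norm form $C_1'\|\vecu\|_{\mL^p(\Omega)}\le h^{d/p}\bigl(\sum_n|\vecu(\vecx_n)|^p\bigr)^{1/p}\le C_2'\|\vecu\|_{\mL^p(\Omega)}$, which passes to the limit $p\to\infty$ to give $C_1'\|\vecu\|_{\mL^\infty(\Omega)}\le\max_n|\vecu(\vecx_n)|\le C_2'\|\vecu\|_{\mL^\infty(\Omega)}$ — and since $\vecu$ is piecewise linear and continuous its maximum is attained at a vertex, so $C_1'=C_2'=1$ works there. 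Thus the main obstacle is bookkeeping: one proves the reference-simplex equivalence with $p$-independent constants (comparing to $\ell^\infty$ of the nodal vector in dimension $d+1$), then transports it by affine scaling and sums, being careful that $h^{d/p}$ rather than $h^d$ is the correct weight in the norm version. I expect no serious analytic difficulty beyond this.
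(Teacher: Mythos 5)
Your proposal is correct and takes essentially the approach the paper itself relies on: the paper offers no proof, merely citing \cite{Johnson87} and \cite{Chen05} for $p=2$, $d=2$ and asserting the general case follows ``in the same manner,'' which is precisely the local norm-equivalence on the reference simplex plus affine scaling and summation that you spell out. Your two refinements --- reading the $p=\infty$ case in norm form, and noting (despite the momentarily self-contradictory aside about $h_K\ge c\,h$) that the bound $h^d\sum_n|\vecu(\vecx_n)|^p\le C_2\|\vecu\|^p_{\mL^p(\Omega)}$ with the \emph{global} weight $h^d$ does require quasi-uniformity, not just shape-regularity --- are sound and make explicit what the paper leaves implicit.
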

A proof of this lemma for $p=2$ and $d=2$ can be found in~\cite[Lemma 7.3]{Johnson87} or~\cite[Lemma 1.12]{Chen05}. The result for general values of $p$ and $d$ can be obtained in the same manner.

The following lemma can be proved by using the technique in~\cite[Lemma 7.3]{Johnson87} .
\begin{lemma}\label{lem:3.5}
There exists an $h$-independent positive constant $C$ such that for each tetrahedron $K\in \mT_h$ and $\vecv\in\mV_h$ there holds
\begin{equation*}
\left| |\vecv(\vecx)| - |\vecv(\vecx_i)|\right|
\leq
Ch|\nabla\vecv(\vecx)|
\quad\text{for all } \vecx\in K,
\end{equation*}
where $\{\vecx_i\}_{i=1,2,3}$ are the vertices of $K$.
\end{lemma}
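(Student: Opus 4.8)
The plan is to work on a single tetrahedron $K$ and exploit the fact that $\vecv\in\mV_h$ is \emph{affine} on $K$, so that $\nabla\vecv$ is a constant matrix on $K$. First I would write, for any $\vecx\in K$ and any vertex $\vecx_i$ of $K$, the exact identity $\vecv(\vecx)-\vecv(\vecx_i)=\nabla\vecv\,(\vecx-\vecx_i)$ (with $\nabla\vecv$ the constant Jacobian on $K$), and then apply the reverse triangle inequality in $\R^3$ to get
\[
\big|\,|\vecv(\vecx)|-|\vecv(\vecx_i)|\,\big|
\le
|\vecv(\vecx)-\vecv(\vecx_i)|
=
|\nabla\vecv\,(\vecx-\vecx_i)|
\le
|\nabla\vecv(\vecx)|\,|\vecx-\vecx_i|.
\]
Since $\vecx,\vecx_i\in K$ and $K$ has diameter at most $h$, we have $|\vecx-\vecx_i|\le \diam K\le h$, which gives the claimed bound with $C=1$. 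The statement of the lemma writes $|\nabla\vecv(\vecx)|$ to emphasize the pointwise (here constant) value on $K$, and the factor $Ch$ is kept general to absorb the precise relationship between $\diam K$ and the mesh parameter $h$ under the regularity assumption on $\mT_h$.

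The only point requiring a little care — and the closest thing to an obstacle — is the meaning of $|\nabla\vecv|$ when $\vecv$ is vector-valued: $\nabla\vecv$ is then a $3\times 3$ matrix, and $|\nabla\vecv|$ should be read as the Frobenius norm (consistent with the energy term $\int_D|\nabla\vecm|^2\dvx$ appearing throughout the paper). With that convention $|\nabla\vecv\,\vecz|\le |\nabla\vecv|\,|\vecz|$ holds since the operator (spectral) norm is dominated by the Frobenius norm, so the chain of inequalities above is valid verbatim. Alternatively, one argues componentwise: for each component $v^{(\ell)}$ one has $|v^{(\ell)}(\vecx)-v^{(\ell)}(\vecx_i)|\le |\nabla v^{(\ell)}|\,h$, and summing the squares over $\ell=1,2,3$ recovers $|\vecv(\vecx)-\vecv(\vecx_i)|\le h\,|\nabla\vecv(\vecx)|$.

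I would remark that, strictly speaking, no mesh-regularity or shape-regularity hypothesis is even needed for this particular estimate — affineness on $K$ plus $\diam K\le h$ suffices, and one may take $C=1$. The reference to the technique of \cite[Lemma 7.3]{Johnson87} is presumably invoked only to fix notation and the Frobenius-norm convention; the substantive content is the elementary estimate above. Consequently the proof is short: state the affine representation, apply the reverse triangle inequality and then the bound $|\vecx-\vecx_i|\le h$, and conclude.
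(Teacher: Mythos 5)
Your proof is correct. Note that the paper itself gives no proof of this lemma; it only remarks that it ``can be proved by using the technique in \cite[Lemma 7.3]{Johnson87}'', which is a scaling/reference-element style argument. Your direct route --- using that $\vecv$ is affine on $K$, so $\vecv(\vecx)-\vecv(\vecx_i)=\nabla\vecv\,(\vecx-\vecx_i)$ with $\nabla\vecv$ constant, then the reverse triangle inequality and $|\vecx-\vecx_i|\le\diam K\le h$ --- is at least as simple and needs no shape-regularity, and your handling of the vector-valued case (operator norm dominated by the Frobenius norm, or a componentwise argument) closes the only point where care is needed. One cosmetic remark: the lemma's statement lists only three vertices $\{\vecx_i\}_{i=1,2,3}$ of the tetrahedron $K$ (presumably a slip carried over from a two-dimensional setting); your argument applies verbatim to all four vertices, so this does not affect anything.
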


Finally the following lemma is elementary; the proof of
which is included for completeness.
\begin{lemma}\label{lem:3.3}
The solutions 
$\left(\vecm_h^{(j)},\vecv_h^{(j+1)}\right)$,
$j=0,1,\cdots,J$, 
obtained from Algorithm~\ref{Algo:1} satisfy
\begin{equation}\label{InE:lem1App}
\left|
\frac{\vecm_h^{(j+1)}(\vecx_n)-\vecm_h^{(j)}(\vecx_n)}{k}
\right|
\leq
\left|
\vecv_h^{(j+1)}(\vecx_n)
\right|
\quad
\forall n=1,2,\cdots, N, \quad j=0,\ldots,J.
\end{equation}
\end{lemma}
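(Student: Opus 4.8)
The plan is to reduce the vector inequality to a pointwise (nodal) statement and then use the definition of $\vecm_h^{(j+1)}$ from Step 3 of Algorithm~\ref{Algo:1} together with the orthogonality $\vecv_h^{(j+1)}(\vecx_n)\cdot\vecm_h^{(j)}(\vecx_n)=0$. First I would fix a node $\vecx_n$ and abbreviate $\veca:=\vecm_h^{(j)}(\vecx_n)$ and $\vecb:=\vecv_h^{(j+1)}(\vecx_n)$, so that $|\veca|=1$, $\veca\cdot\vecb=0$, and by Step 3,
\[
\vecm_h^{(j+1)}(\vecx_n)=\frac{\veca+k\vecb}{|\veca+k\vecb|}.
\]
The claimed inequality at this node is then
\[
\left|\frac{1}{k}\left(\frac{\veca+k\vecb}{|\veca+k\vecb|}-\veca\right)\right|\le|\vecb|.
\]

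Next I would compute $|\veca+k\vecb|^2=|\veca|^2+2k\,\veca\cdot\vecb+k^2|\vecb|^2=1+k^2|\vecb|^2$ using orthogonality, so $|\veca+k\vecb|=\sqrt{1+k^2|\vecb|^2}=:r\ge 1$, which is exactly inequality~\eqref{InE:CondGr1}. The key step is to square the left-hand side and expand. Writing the nodal difference quotient as $\frac{1}{k}\bigl((\veca+k\vecb)/r-\veca\bigr)=\frac{1}{kr}\bigl((1-r)\veca+k\vecb\bigr)$, and again using $\veca\cdot\vecb=0$ and $|\veca|=1$, one gets
\[
\left|\frac{1}{k}\left(\frac{\veca+k\vecb}{r}-\veca\right)\right|^2
=\frac{(1-r)^2+k^2|\vecb|^2}{k^2r^2}.
\]
Now substitute $r^2=1+k^2|\vecb|^2$, so $k^2|\vecb|^2=r^2-1=(r-1)(r+1)$; the numerator becomes $(r-1)^2+(r-1)(r+1)=(r-1)\bigl((r-1)+(r+1)\bigr)=2r(r-1)$. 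Hence the squared quantity equals $\dfrac{2r(r-1)}{k^2r^2}=\dfrac{2(r-1)}{k^2r}$, and we must show this is $\le|\vecb|^2=(r^2-1)/k^2=(r-1)(r+1)/k^2$. After cancelling the common factor $(r-1)/k^2\ge 0$, this reduces to $2/r\le r+1$, i.e. $r^2+r-2\ge 0$, i.e. $(r-1)(r+2)\ge 0$, which holds since $r\ge 1$. Summing (or rather, noting the estimate holds at every node) gives~\eqref{InE:lem1App}.

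I do not expect any serious obstacle here; the only point requiring a little care is the degenerate case $\vecb=\veczero$ (equivalently $r=1$), where both sides vanish and the inequality is trivial, so the division by $(r-1)$ above is only performed when $r>1$. Everything else is the elementary algebra sketched above, driven entirely by the orthogonality relation $\vecv_h^{(j+1)}(\vecx_n)\cdot\vecm_h^{(j)}(\vecx_n)=0$ built into the definition of $\mW_h^{(j)}$ and the normalization in Step 3.
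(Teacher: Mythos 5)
Your proposal is correct and follows essentially the same route as the paper: both compute, via the orthogonality $\veca\cdot\vecb=0$ and $|\veca+k\vecb|=\sqrt{1+k^2|\vecb|^2}=r$, that the squared nodal difference quotient equals $2(r-1)/(k^2 r)$, and then bound it by $|\vecb|^2=(r-1)(r+1)/k^2$ using $r\ge 1$ (the paper via $2\le r+1$, you via the equivalent $2/r\le r+1$). Your explicit remark about the degenerate case $r=1$ is a harmless refinement of the same argument.
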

\begin{proof}
By using the definition of $\vecm_h^{(j+1)}$, the
property
$\vecm_h^{(j)}(\vecx_n) \cdot \vecv_h^{(j+1)}(\vecx_n) = 0$,
and the identity 
\[
|\vecm_h^{(j)}(\vecx_n)
+
k\vecv_h^{(j+1)}(\vecx_n)|
=
\sqrt{1+k^2|\vecv_h^{(j+1)}(\vecx_n)|^2}
\]
we obtain
\begin{align*}
\left|
\frac{\vecm_h^{(j+1)}(\vecx_n)-\vecm_h^{(j)}(\vecx_n)}{k}
\right|^2
&=
\left|
\frac{\vecm_h^{(j)}(\vecx_n)+k\vecv_h^{(j+1)}(\vecx_n)}
{k\left |\vecm_h^{(j)}(\vecx_n)+k\vecv_h^{(j+1)}(\vecx_n)\right|}
-
\frac{\vecm_h^{(j)}(\vecx_n)}{k}
\right|^2\\
&=
\frac{\left|\vecm_h^{(j)}(\vecx_n)
\big(1-|\vecm_h^{(j)}(\vecx_n)+k\vecv_h^{(j+1)}(\vecx_n)|\big)
+
k\vecv_h^{(j+1)}(\vecx_n)\right|^2}
{k^2|\vecm_h^{(j)}(\vecx_n)+k\vecv_h^{(j+1)}(\vecx_n)|^2}\\
&=
\frac{2+2k^2\left|\vecv_h^{(j+1)}(\vecx_n)\right|^2
-
2\sqrt{1+k^2|\vecv_h^{(j+1)}(\vecx_n)|^2}}
{k^2\left(1+k^2|\vecv_h^{(j+1)}(\vecx_n)|^2\right)}\\
&=
2\frac{\sqrt{1+k^2|\vecv_h^{(j+1)}(\vecx_n)|^2}-1}
{k^2\sqrt{1+k^2|\vecv_h^{(j+1)}(\vecx_n)|^2}}.
\end{align*}
Using the fact that 
\[
2\leq\sqrt{1+k^2|\vecv_h^{(j+1)}(\vecx_n)|^2}+1
\]
we deduce
\begin{align*}
\left|
\frac{\vecm_h^{(j+1)}(\vecx_n)-\vecm_h^{(j)}(\vecx_n)}{k}
\right|^2
&\leq
\frac{\left(\sqrt{1+k^2|\vecv_h^{(j+1)}(\vecx_n)|^2}+1\right)
\left(\sqrt{1+k^2|\vecv_h^{(j+1)}(\vecx_n)|^2}-1\right)}
{k^2}\nn\\
&=
\left|
\vecv_h^{(j+1)}(\vecx_n) 
\right|^2,
\end{align*}
proving the lemma.
\end{proof}

In the following section, we show that our numerical solution converges to a weak solution  of the problem~\eqref{E:Maxwell2}--\eqref{E:LLG}.
\section{Existence of weak solutions}\label{Sec:WSo}
The next lemma provides a bound in the $\mL^2$-norm for 
the discrete solutions.
\begin{lemma}\label{lem:4.1}
The sequence 
$\left\{
\big(\vecm_h^{(j)},\vecv_h^{(j+1)},\vecH^{(j)}_h\big)
\right\}_{j=0,1,\cdots,J}$ produced by Algorithm \ref{Algo:1} satisfies
\begin{align}\label{E:4.1}
\cE_h^{(j)}
+
C\sum_{i=0}^{j-1} k\|\vecv_h^{(i+1)}\|^2_{D}
&+
\lambda_2\mu^{-1}
\sum_{i=0}^{j-1} k\|d_t\vecH^{(i+1)}_h\|^2_{\wtd D}\nn\\
&+
2\mu_0^{-1}\sigma
\sum_{i=0}^{j-1} k\|\nabla\times\vecH^{(i+1/2)}_h\|^2_{\wtd D}
\leq
\cE_h^0,
\end{align}
where
\[
\cE_h^{(j)}=
\|\nabla\vecm^{(j)}_h\|^2_{D}
+\|\vecH^{(j)}_h\|^2_{\wtd D}
+\lambda_2\mu^{-1}\mu_0^{-1}\sigma\|\nabla\times\vecH^{(j)}_h\|^2_{\wtd D},
\]
and
\[
C=
\begin{cases}
\lambda_2\mu^{-1}, &\quad\theta\in[\frac{1}{2},1] \\
\lambda_2\mu^{-1}-(1-2\theta)C_1kh^{-2}, &\quad\theta\in[0,\frac{1}{2}),
\end{cases}
\]
in which $C_1$ is a positive constant which is independent with $j$, $k$ and $h$.
\end{lemma}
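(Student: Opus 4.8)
The plan is to derive the energy estimate \eqref{E:4.1} by testing the two discrete equations \eqref{disE:LLG}--\eqref{disE:NewMax2} with suitable functions, adding the results, and telescoping in $j$. First I would test \eqref{disE:LLG} with $\vecw_h^{(j)} = \vecv_h^{(j+1)}\in\mW_h^{(j)}$ (this is legitimate since $\vecv_h^{(j+1)}$ itself lies in $\mW_h^{(j)}$). The antisymmetric term $\lambda_1\inpro{\vecm_h^{(j)}\times\vecv_h^{(j+1)}}{\vecv_h^{(j+1)}}_D$ vanishes, leaving
\[
\lambda_2\|\vecv_h^{(j+1)}\|_D^2
=
-\mu\inpro{\nabla(\vecm_h^{(j)}+k\theta\vecv_h^{(j+1)})}{\nabla\vecv_h^{(j+1)}}_D
+\mu\inpro{\vecH_h^{(j+1/2)}}{\vecv_h^{(j+1)}}_D .
\]
For the first term on the right I would use the algebraic identity $\inpro{\nabla(\vecm_h^{(j)}+k\theta\vecv_h^{(j+1)})}{\nabla\vecv_h^{(j+1)}}_D = \tfrac{1}{2k}\big(\|\nabla(\vecm_h^{(j)}+k\vecv_h^{(j+1)})\|_D^2 - \|\nabla\vecm_h^{(j)}\|_D^2\big) + k(\theta-\tfrac12)\|\nabla\vecv_h^{(j+1)}\|_D^2$, which is the standard $\theta$-scheme splitting. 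Next I would test \eqref{disE:NewMax2} with $\veczeta_h = 2k\vecH_h^{(j+1/2)}\in\mY_h$; the term $\mu_0\inpro{d_t\vecH_h^{(j+1)}}{2k\vecH_h^{(j+1/2)}}_{\wtd D} = \mu_0\big(\|\vecH_h^{(j+1)}\|_{\wtd D}^2 - \|\vecH_h^{(j)}\|_{\wtd D}^2\big)$ telescopes, and the curl term gives $2\sigma k\|\nabla\times\vecH_h^{(j+1/2)}\|_{\wtd D}^2$.

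The two equations are coupled through the cross terms $\mu\inpro{\vecH_h^{(j+1/2)}}{\vecv_h^{(j+1)}}_D$ and $-\mu_0\inpro{\vecv_h^{(j+1)}}{2k\vecH_h^{(j+1/2)}}_{\wtd D}$. Here I would multiply the LLG identity by $2k\mu_0/\mu$ (or equivalently scale the Maxwell identity), using that $\vecv_h^{(j+1)}$ is supported in $D$ so $\inpro{\vecH_h^{(j+1/2)}}{\vecv_h^{(j+1)}}_D = \inpro{\vecH_h^{(j+1/2)}}{\vecv_h^{(j+1)}}_{\wtd D}$; the coupling terms then cancel exactly upon addition. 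After this cancellation, summing over $i=0,\dots,j-1$ and telescoping yields
\[
\|\nabla\vecm_h^{(j)}\|_D^2 + \tfrac{2\mu_0 k}{\mu}\lambda_2\sum_{i=0}^{j-1}\|\vecv_h^{(i+1)}\|_D^2
+ \|\vecH_h^{(j)}\|_{\wtd D}^2 + \tfrac{2\sigma k}{\mu_0}\mu_0\sum\|\nabla\times\vecH_h^{(i+1/2)}\|_{\wtd D}^2 + \dots
= \|\nabla\vecm_h^{(0)}\|_D^2 + \dots,
\]
modulo the leftover term $2k^2(\theta-\tfrac12)\tfrac{\mu_0}{\mu}\mu\sum\|\nabla\vecv_h^{(i+1)}\|_D^2$ and a similar $\sigma$-weighted curl telescoping term (which should be handled by writing $2\inpro{\nabla\times\vecH_h^{(j+1/2)}}{\nabla\times d_t\vecH_h^{(j+1)}}$-type identities, giving the $\lambda_2\mu^{-1}\mu_0^{-1}\sigma\|\nabla\times\vecH_h^{(j)}\|_{\wtd D}^2$ contribution to $\cE_h^{(j)}$, plus the $\lambda_2\mu^{-1}k\|d_t\vecH_h^{(i+1)}\|_{\wtd D}^2$ sum — this requires also testing Maxwell with $d_t\vecH_h^{(j+1)}$ and combining, so in fact I would use the test function $\veczeta_h = 2k\vecH_h^{(j+1/2)} + ck^2 d_t\vecH_h^{(j+1)}$ or run two separate tests and combine linearly).

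The key point for the case distinction on $\theta$ is the sign of the leftover term $k(\theta-\tfrac12)\|\nabla\vecv_h^{(j+1)}\|_D^2$ (with the appropriate positive prefactor). When $\theta\in[\tfrac12,1]$ it is nonnegative and can simply be dropped, giving $C=\lambda_2\mu^{-1}$ after rescaling the whole inequality back by $\mu/(2\mu_0)$ or keeping track of constants. When $\theta\in[0,\tfrac12)$ it has the wrong sign, and I would absorb it into the $\|\vecv_h^{(j+1)}\|_D^2$ term using the inverse inequality $\|\nabla\vecv_h^{(j+1)}\|_D^2 \le C_1 h^{-2}\|\vecv_h^{(j+1)}\|_D^2$ for functions in $\mV_h$; this costs $(1-2\theta)C_1 k h^{-2}$ from the coefficient of $\|\vecv_h^{(j+1)}\|_D^2$, producing exactly the stated $C = \lambda_2\mu^{-1} - (1-2\theta)C_1 k h^{-2}$. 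I expect the main obstacle to be organizing the curl/time-derivative bookkeeping so that the $\lambda_2\mu^{-1}k\|d_t\vecH_h^{(i+1)}\|_{\wtd D}^2$ sum and the $\lambda_2\mu^{-1}\mu_0^{-1}\sigma\|\nabla\times\vecH_h^{(j)}\|_{\wtd D}^2$ term in $\cE_h^{(j)}$ both appear with the right constants — this needs a careful choice of test function in \eqref{disE:NewMax2} (a combination of $\vecH_h^{(j+1/2)}$ and $d_t\vecH_h^{(j+1)}$) and the identity $\inpro{\nabla\times\vecH_h^{(j+1/2)}}{\nabla\times(\vecH_h^{(j+1)}-\vecH_h^{(j)})}_{\wtd D} = \tfrac12(\|\nabla\times\vecH_h^{(j+1)}\|_{\wtd D}^2 - \|\nabla\times\vecH_h^{(j)}\|_{\wtd D}^2)$ — while the cancellation of the MLLG coupling terms and the $\theta$-dependent inverse-estimate step are routine once the scaling is fixed.
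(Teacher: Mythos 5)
Your testing strategy is the same as the paper's (test \eqref{disE:LLG} with $\vecv_h^{(j+1)}$, test \eqref{disE:NewMax2} with $\vecH_h^{(j+1/2)}$ and with $d_t\vecH_h^{(j+1)}$, scale so the coupling terms cancel, sum in $j$, and use the inverse estimate when $\theta<\tfrac12$), but there is one genuine gap: your telescoping of the exchange energy is not legitimate as written, because $\vecm_h^{(j+1)}$ is \emph{not} $\vecm_h^{(j)}+k\vecv_h^{(j+1)}$. Step 3 of Algorithm~\ref{Algo:1} renormalizes the nodal values, so your $\theta$-scheme identity produces the quantity $\|\nabla(\vecm_h^{(j)}+k\vecv_h^{(j+1)})\|_D^2$, and the chain $\|\nabla\vecm_h^{(0)}\|_D^2\to\|\nabla\vecm_h^{(j)}\|_D^2$ does not close unless you can compare this with $\|\nabla\vecm_h^{(j+1)}\|_D^2$. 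The paper does exactly this via Lemma~\ref{lem:4.0} (Bartels' projection estimate): using \eqref{InE:CondGr1} and the assumed angle condition \eqref{E:CondTe} on $\mT_h|_D$, one gets $\|\nabla\vecm_h^{(j+1)}\|_D^2\le\|\nabla(\vecm_h^{(j)}+k\vecv_h^{(j+1)})\|_D^2$, which is what turns your equality into the inequality \eqref{E:4.1}. Without invoking this lemma (and the mesh hypothesis it needs), the central telescoping step of your argument fails; this is the key idea of the discrete energy law, not a formality.

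A secondary point you left as ``bookkeeping'' but which actually determines the constants: when you test \eqref{disE:NewMax2} with $d_t\vecH_h^{(j+1)}$ to generate the $\|d_t\vecH_h^{(i+1)}\|_{\wtd D}^2$ sum and the $\sigma\|\nabla\times\vecH_h^{(j)}\|_{\wtd D}^2$ part of $\cE_h^{(j)}$, the coupling term $-2k\mu_0\inpro{\vecv_h^{(j+1)}}{d_t\vecH_h^{(j+1)}}_{\wtd D}$ does not cancel; it must be split by Young's inequality, which costs $k\mu_0\|\vecv_h^{(j+1)}\|_D^2$ and, after the weighting by $\lambda_2\mu^{-1}\mu_0^{-1}$, reduces the coefficient of $\sum k\|\vecv_h^{(i+1)}\|_D^2$ from $2\lambda_2\mu^{-1}$ to the stated $\lambda_2\mu^{-1}$ (and halves the $d_t\vecH$ contribution). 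Your plan of ``two separate tests combined linearly'' is exactly the paper's route (its \eqref{E:4.5} plus $\lambda_2\mu^{-1}\mu_0^{-1}$ times \eqref{E:4.6}), but this Young's-inequality trade-off needs to be carried out to arrive at the constants in the lemma. Your handling of the $\theta$-dependent term via the inverse estimate matches the paper and is fine.
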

\begin{proof}
Choosing $\vecw^{(j)}_h=\vecv^{(j+1)}_h$ in~\eqref{disE:LLG} and $\veczeta_h=\vecH^{(j+1/2)}_h$ in~\eqref{disE:NewMax2}, we obtain
\begin{align}
&\lambda_2
\|\vecv_h^{(j+1)}\|^2_{D}
+k\theta\mu 
\|\nabla\vecv_h^{(j+1)}\|^2_{D}
=
-\mu\inpro{\nabla\vecm_h^{(j)}}{\nabla\vecv_h^{(j+1)}}_{D} 
+
\mu
\inpro{\vecH^{(j+1/2)}_h}{\vecv_h^{(j+1)}}_{D}\label{E:4.2}\\
&\frac{\mu_0}{2} d_t\|\vecH^{(j+1)}_h\|^2_{\wtd D}
+\sig\|\nabla\times\vecH^{(j+1/2)}_h\|^2_{\wtd D}
=
-\mu_0\inpro{\vecv^{(j+1)}_h}{\vecH^{(j+1/2)}_h}_{\wtd D}.\label{E:4.3}
\end{align}
Multiplying $\mu\mu_0^{-1}$ to both sides of \eqref{E:4.3} and adding the resulting equation to \eqref{E:4.2}, we deduce
\begin{align}\label{E:4.4}
\lambda_2
\|\vecv_h^{(j+1)}\|^2_{D}
&+k\theta\mu 
\|\nabla\vecv_h^{(j+1)}\|^2_{D}
+\frac{\mu}{2} d_t\|\vecH^{(j+1)}_h\|^2_{\wtd D}\nn\\
&+\mu\mu_0^{-1}\sig\|\nabla\times\vecH^{(j+1/2)}_h\|^2_{\wtd D}
=
-\mu\inpro{\nabla\vecm_h^{(j)}}{\nabla\vecv_h^{(j+1)}}_{D}.  
\end{align}
Since $\vecm^{(j)}_h+k\vecv^{(j+1)}_h\in\mV_h$ and 
\[
\vecm_h^{(j+1)}
=
I_h\left(\frac{\vecm^{(j)}_h+k\vecv^{(j+1)}_h}{|\vecm^{(j)}_h+k\vecv^{(j+1)}_h|}\right),
\]
it follows from~\eqref{InE:CondGr1} and Lemma~\ref{lem:4.0} that
\begin{equation*}
\|\nabla\vecm^{(j+1)}_h\|^2_{D}
\leq
\|\nabla(\vecm^{(j)}_h+k\vecv^{(j+1)}_h)\|^2_{D}.
\end{equation*}
Equivalently, we have
\begin{equation}\label{InE2}
\|\nabla\vecm^{(j+1)}_h\|^2_{D}
\leq
\|\nabla\vecm^{(j)}_h\|^2_{D}
+k^2\|\nabla\vecv^{(j+1)}_h\|^2_{D}
+2k\inpro{\nabla\vecm^{(j)}_h}{\nabla\vecv^{(j+1)}_h}_{D}.
\end{equation}
Equality \eqref{E:4.4} is used to obtain from~\eqref{InE2} the following inequality
\begin{align*}
\|\nabla\vecm^{(j+1)}_h\|^2_{D}
&\leq
\|\nabla\vecm^{(j)}_h\|^2_{D}
-k^2(2\theta-1)\|\nabla\vecv^{(j+1)}_h\|^2_{D}
-2k\lambda_2\mu^{-1}\|\vecv_h^{(j+1)}\|^2_{D}\\
&\quad
-kd_t\|\vecH^{(j+1)}_h\|^2_{\wtd D}
-2k\mu_0^{-1}\sig\|\nabla\times\vecH^{(j+1/2)}_h\|^2_{\wtd D}.
\end{align*}
Hence,
\begin{align}\label{E:4.5}
\|\nabla\vecm^{(j+1)}_h\|^2_{D}
&+\|\vecH^{(j+1)}_h\|^2_{\wtd D}
+2k\lambda_2\mu^{-1}\|\vecv_h^{(j+1)}\|^2_{D}
+2k\mu_0^{-1}\sig\|\nabla\times\vecH^{(j+1/2)}_h\|^2_{\wtd D}\nn\\
&+k^2(2\theta-1)\|\nabla\vecv^{(j+1)}_h\|^2_{D}
\leq
\|\nabla\vecm^{(j)}_h\|^2_{D}
+\|\vecH^{(j)}_h\|^2_{\wtd D}.
\end{align}
Next choosing $\veczeta_h=d_t\vecH^{(j+1)}_h$ in equation \eqref{disE:NewMax2}, we obtain
\begin{align*}
2k\mu_0\|d_t\vecH^{(j+1)}_h\|^2_{\wtd D}
+\sig\|\nabla\times\vecH^{(j+1)}_h\|^2_{\wtd D}
=
&\sig\|\nabla\times\vecH^{(j)}_h\|^2_{\wtd D}\\
&-2k\mu_0\inpro{\vecv^{(j+1)}_h}{d_t\vecH^{(j+1)}_h}_{\wtd D}.
\end{align*}
The term $-2k\mu_0\inpro{\vecv^{(j+1)}_h}{d_t\vecH^{(j+1)}_h}_{\wtd D}$ can be estimated by
\[
-2k\mu_0\inpro{\vecv^{(j+1)}_h}{d_t\vecH^{(j+1)}_h}_{\wtd D}
\leq
k\mu_0\|\vecv^{(j+1)}_h\|^2_{D}
+k\mu_0\|d_t\vecH^{(j+1)}_h\|^2_{\wtd D}.
\]
Therefore, we deduce
\begin{align}\label{E:4.6}
k\mu_0\|d_t\vecH^{(j+1)}_h\|^2_{\wtd D}
+\sig\|\nabla\times\vecH^{(j+1)}_h\|^2_{\wtd D}
\leq
\sig\|\nabla\times\vecH^{(j)}_h\|^2_{\wtd D}
+k\mu_0\|\vecv^{(j+1)}_h\|^2_{D}.
\end{align}
Multiplying $\lambda_2\mu^{-1}\mu_0^{-1}$ to both sides of 
\eqref{E:4.6} and adding the resulting equation to 
\eqref{E:4.5}, we obtain
\begin{align*}
&\|\nabla\vecm^{(j+1)}_h\|^2_{D}
+\|\vecH^{(j+1)}_h\|^2_{\wtd D}
+\lambda_2\mu^{-1}\mu_0^{-1}\sig\|\nabla\times\vecH^{(j+1)}_h\|^2_{\wtd D}\\
&+k\lambda_2\mu^{-1}\|\vecv_h^{(j+1)}\|^2_{D}
+k\lambda_2\mu^{-1}\|d_t\vecH^{(j+1)}_h\|^2_{\wtd D}
+2k\mu_0^{-1}\sig\|\nabla\times\vecH^{(j+1/2)}_h\|^2_{\wtd D}\\
&+k^2(2\theta-1)\|\nabla\vecv^{(j+1)}_h\|^2_{D} \\
\leq \
&\|\nabla\vecm^{(j)}_h\|^2_{D}
+\|\vecH^{(j)}_h\|^2_{\wtd D}
+\lambda_2\mu^{-1}\mu_0^{-1}\sig
\|\nabla\times\vecH^{(j)}_h\|^2_{\wtd D}.
\end{align*}
Replacing $j$ by $i$ in the above inequality and summing over $i$ from $0$ to $j-1$ yield
\begin{align}\label{E:4.7}
&\|\nabla\vecm^{(j)}_h\|^2_{D}
+\|\vecH^{(j)}_h\|^2_{\wtd D}
+\lambda_2\mu^{-1}\mu_0^{-1}\sig\|\nabla\times\vecH^{(j)}_h\|^2_{\wtd D}
+\lambda_2\mu^{-1}\sum_{i=0}^{j-1} k\|\vecv_h^{(i+1)}\|^2_{D}\nn\\
&+\lambda_2\mu^{-1}\sum_{i=0}^{j-1} k\|d_t\vecH^{(i+1)}_h\|^2_{\wtd D}
+2\mu_0^{-1}\sig\sum_{i=0}^{j-1} 
k\|\nabla\times\vecH^{(i+1/2)}_h\|^2_{\wtd D}\nn\\
&+k^2(2\theta-1)\sum_{i=0}^{j-1} \|\nabla\vecv^{(i+1)}_h\|^2_{D}
\nn\\
\le \
& \|\nabla\vecm^0_h\|^2_{D}
+\|\vecH^0_h\|^2_{\wtd D}
+\lambda_2\mu^{-1}\mu_0^{-1}\sig
\|\nabla\times\vecH^0_h\|^2_{\wtd D}.
\end{align}

When $\theta\in [\frac{1}{2},1]$, the term $k^2(2\theta-1)\sum_{i=0}^{j-1} \|\nabla\vecv^{(i+1)}_h\|^2_{D}$ is nonnegative. Hence, from inequality~\eqref{E:4.7} we obtain~\eqref{E:4.1} where $C=\lam_2\mu^{-1}$.
When $\theta\in [0,\frac{1}{2})$, using the inverse estimate we obtain
\begin{equation}\label{InE:Inverse}
C_1k^2h^{-2}(2\theta-1)\sum_{i=0}^{j-1} \|\vecv^{(i+1)}_h\|^2_{D}
\leq
k^2(2\theta-1)\sum_{i=0}^{j-1} \|\nabla\vecv^{(i+1)}_h\|^2_{D},
\end{equation}
where, $C_1$ is a positive constant which is independent with $j$, $k$ and $h$.
Hence, from inequality~\eqref{E:4.7} we obtain~\eqref{E:4.1} where $C=\lam_2\mu^{-1}-C_1kh^{-2}(1-2\theta)$.
This completes the proof of the lemma.
\end{proof}
\begin{remark}\label{Rem:1}
The constant $C$ in the above lemma is positive when
$\theta\in[1/2,1]$. When $\theta\in[0,1/2)$ the
additional condition $k=o(h^2)$ assures us that $C$ is
positive when $h$ and $k$ are sufficiently small. 
This condition will be required later in the
following lemma and theorem.
\end{remark}

The discrete solutions $\vecm_h^{(j)}$, $\vecv_h^{(j+1)}$ and $\vecH_h^{(j)}$ constructed via Algorithm~\ref{Algo:1} are interpolated in time in the following definition.
\begin{definition}\label{Def:m}
For each $t\in[0,T]$, let
$j\in \{ 0,...,J \}$ be
such that  $t \in [t_j, t_{j+1})$. We define for $t\in[0,T]$ and $\vecx\in D$
\begin{align*}
\vecm_{h,k}(t,\vecx)
&:= 
\frac{t-t_j}{k}\vecm_h^{(j+1)}(\vecx)
+
\frac{t_{j+1}-t}{k}\vecm_h^{(j)}(\vecx), \\
\vecm_{h,k}^{-}(t,\vecx) 
&:= 
\vecm_h^{(j)}(\vecx), \\
\vecv_{h,k}(t,\vecx) 
&:= 
\vecv_h^{(j+1)}(\vecx),\\
\vecH_{h,k}(t,x) 
&:= 
\frac{t-t_j}{k}\vecH_h^{(j+1)}(\vecx)
+
\frac{t_{j+1}-t}{k}\vecH_h^{(j)}(\vecx), \\
\wtd\vecH_{h,k}(t,x) 
&:= 
\frac{1}{2}\left(\vecH_h^{(j+1)}(\vecx)+\vecH_h^{(j)}(\vecx)\right), \\
\text{ and}\quad
\vecH_{h,k}^{-}(t,\vecx) 
&:= 
\vecH_h^{(j)}(\vecx).
\end{align*}
\end{definition}

The following lemma shows  that $\{\vecm_{h,k}\}$, $\{\vecm_{h,k}^{-}\}$ and $\{\vecv_{h,k}\}$ converge (up to the extraction of subsequences) as $h$ and $k$ tend  to $0$.
\begin{lemma}\label{lem:4.2}
Assume that $h$ and $k$ go to 0 with a further condition 
$k=o(h^2)$ when $\theta\in[0,\frac{1}{2})$ and no
condition otherwise.
There exist $\vecm\in\mH^1(D_T,\R^3)$ and
$\vecH\in\mH^1(0,T,\mL^2(\wtd D))$ such that  
$\nabla\times\vecH$ belongs to $\mL^2(\wtd D_T)$ and
\begin{gather}
\vecm_{h,k} \goto \vecm \text{ strongly in } \mL^2(D_T),\label{star:1} \\
\frac{\partial\vecm_{h,k}}{\partial t}  
\rightharpoonup \vecm_t
\text{ weakly in } \mL^2(D_T),\label{star:2} \\
\vecv_{h,k}
\rightharpoonup \vecm_t
\text{ weakly in } \mL^2(D_T),\label{star:3} \\
\vecm_{h,k}^- \goto \vecm \text{ strongly in } \mL^2(D_T),\label{star:4}\\
|\vecm|=1 \text{ a.e. in } D_T,\label{star:5}\\
\vecH_{h,k} \rightharpoonup \vecH \text{ weakly in } \mH^1(0,T,\mL^2(\wtd D)), \label{star:6}\\
\nabla\times\wtd\vecH_{h,k} 
\rightharpoonup \nabla\times\vecH
\text{ weakly in } \mL^2(\wtd D_T),\label{star:7} \\
\text{and}\quad\nabla\times\vecH_{h,k}^- 
\rightharpoonup \nabla\times\vecH
\text{ weakly in } \mL^2(\wtd D_T).\label{star:8}
\end{gather}
\end{lemma}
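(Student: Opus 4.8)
The plan is to extract weak/strong limits from the energy bound in Lemma~\ref{lem:4.1} and then identify the various time-interpolants with a common limit. First I would use Lemma~\ref{lem:4.1} together with the well-posedness of the initial interpolations (so that $\cE_h^0$ is bounded uniformly in $h$ by $\cE(0)$, using $\|\nabla\vecm_h^{(0)}\|_D\le C\|\nabla\vecm_0\|_D$ and the stability of $I_{\mY_h}$ on $\vecH_0\in\mH(\curl;\wtd D)$) to get, for all $j$ and uniformly in $h,k$,
\begin{equation*}
\|\nabla\vecm_h^{(j)}\|_D^2
+\|\vecH_h^{(j)}\|_{\wtd D}^2
+\sum_{i=0}^{j-1}k\|\vecv_h^{(i+1)}\|_D^2
+\sum_{i=0}^{j-1}k\|d_t\vecH_h^{(i+1)}\|_{\wtd D}^2
+\sum_{i=0}^{j-1}k\|\nabla\times\vecH_h^{(i+1/2)}\|_{\wtd D}^2
\le C.
\end{equation*}
Here the condition $k=o(h^2)$ for $\theta\in[0,\frac12)$ (Remark~\ref{Rem:1}) is exactly what makes the constant $C$ in Lemma~\ref{lem:4.1} positive and bounded below, so all five terms above are genuinely controlled. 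Translating these bounds through Definition~\ref{Def:m}: $\|\vecv_{h,k}\|_{\mL^2(D_T)}^2=\sum_i k\|\vecv_h^{(i+1)}\|_D^2\le C$; $\|\partial_t\vecm_{h,k}\|_{\mL^2(D_T)}\le\|\vecv_{h,k}\|_{\mL^2(D_T)}$ by Lemma~\ref{lem:3.3} applied nodally together with Lemma~\ref{lem:3.4} (the discrete $\mL^2$-norm equivalence) since $\partial_t\vecm_{h,k}=I_{\mV_h}$ of the nodal difference quotient; $\|\nabla\vecm_{h,k}\|_{\mL^2(D_T)}^2\le\sup_j\|\nabla\vecm_h^{(j)}\|_D^2\cdot T\le C$ by convexity of the time-linear interpolant in the gradient; and likewise $\|\vecH_{h,k}\|_{\mL^2(\wtd D_T)}$, $\|\partial_t\vecH_{h,k}\|_{\mL^2(\wtd D_T)}$, $\|\nabla\times\wtd\vecH_{h,k}\|_{\mL^2(\wtd D_T)}$, $\|\nabla\times\vecH_{h,k}^-\|_{\mL^2(\wtd D_T)}$ are all bounded. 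Hence $\{\vecm_{h,k}\}$ is bounded in $\mH^1(D_T,\R^3)$ and $\{\vecH_{h,k}\}$ is bounded in $\mH^1(0,T,\mL^2(\wtd D))$ with $\{\nabla\times\vecH_{h,k}\}$ bounded in $\mL^2(\wtd D_T)$.

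Next I would pass to subsequences: by reflexivity, $\vecm_{h,k}\rightharpoonup\vecm$ in $\mH^1(D_T,\R^3)$ for some limit $\vecm$, which gives \eqref{star:2} and, by the compact embedding $\mH^1(D_T)\hookrightarrow\hookrightarrow\mL^2(D_T)$ (Rellich--Kondrachov on the space-time cylinder), the strong convergence \eqref{star:1}. For \eqref{star:3}, $\vecv_{h,k}\rightharpoonup\bm\chi$ weakly in $\mL^2(D_T)$ along a further subsequence; to see $\bm\chi=\vecm_t$, note $\|\vecv_{h,k}-\partial_t\vecm_{h,k}\|_{\mL^2(D_T)}\to0$. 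This is the one genuinely non-obvious estimate: on each tetrahedron $K$ and at each node $\vecx_n$, $\partial_t\vecm_{h,k}(\vecx_n)=k^{-1}(\vecm_h^{(j+1)}(\vecx_n)-\vecm_h^{(j)}(\vecx_n))$, whose modulus is at most $|\vecv_h^{(j+1)}(\vecx_n)|$ by Lemma~\ref{lem:3.3}, and in fact $\vecv_h^{(j+1)}(\vecx_n)-\partial_t\vecm_{h,k}(\vecx_n)$ is small relative to $k|\vecv_h^{(j+1)}(\vecx_n)|^2$; combining the nodal identity from the proof of Lemma~\ref{lem:3.3} with the discrete-norm equivalence (Lemma~\ref{lem:3.4}) and the inverse inequality, one bounds $\|\vecv_{h,k}-\partial_t\vecm_{h,k}\|_{\mL^2(D_T)}^2$ by $Ck^2\sum_i k\|\vecv_h^{(i+1)}\|_{\mL^4(D)}^4\le Ck^2 h^{-3}\big(\sum_i k\|\vecv_h^{(i+1)}\|_D^2\big)\sup_i\|\vecv_h^{(i+1)}\|_D^2$, which $\to0$ under the step-size hypotheses (this is precisely where $k=o(h^2)$, resp.\ $k=o(h)$ for $\theta=\tfrac12$, enters a second time). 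Therefore $\bm\chi=\vecm_t$, giving \eqref{star:3}. For \eqref{star:4}, $\|\vecm_{h,k}^--\vecm_{h,k}\|_{\mL^2(D_T)}^2=\sum_i\int_{t_i}^{t_{i+1}}\big(\tfrac{t-t_i}{k}\big)^2\|\vecm_h^{(i+1)}-\vecm_h^{(i)}\|_D^2\dt\le Ck\sum_i k\|\vecv_h^{(i+1)}\|_D^2\le Ck\to0$, so $\vecm_{h,k}^-$ shares the limit $\vecm$ and \eqref{star:4} follows from \eqref{star:1}.

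For \eqref{star:5}: since $|\vecm_h^{(j)}(\vecx_n)|=1$ at every node, Lemma~\ref{lem:3.5} gives $\big||\vecm_h^{(j)}(\vecx)|-1\big|\le Ch|\nabla\vecm_h^{(j)}(\vecx)|$ pointwise on each tetrahedron, hence $\big\||\vecm_{h,k}^-|-1\big\|_{\mL^2(D_T)}\le Ch\|\nabla\vecm_{h,k}^-\|_{\mL^2(D_T)}\le Ch\to0$; combined with the strong $\mL^2$-convergence \eqref{star:4} (and a further subsequence converging a.e.) this forces $|\vecm|=1$ a.e.\ in $D_T$. Finally, for the magnetic field: weak $\mH^1(0,T,\mL^2(\wtd D))$-boundedness yields \eqref{star:6} with limit $\vecH$ along a subsequence; weak $\mL^2(\wtd D_T)$-boundedness of $\nabla\times\wtd\vecH_{h,k}$ gives a weak limit which one identifies with $\nabla\times\vecH$ by testing against $\mC_0^\infty$ vector fields and integrating by parts in space (the curl is closed under weak convergence), proving \eqref{star:7}; and \eqref{star:8} follows the same way once one observes $\|\nabla\times(\vecH_{h,k}^--\wtd\vecH_{h,k})\|_{\mL^2(\wtd D_T)}$ and $\|\nabla\times(\vecH_{h,k}-\wtd\vecH_{h,k})\|_{\mL^2(\wtd D_T)}$ are both $O(k^{1/2})$, again using $\sum_i k\|d_t\vecH_h^{(i+1)}\|_{\wtd D}^2\le C$ and the fact that $\nabla\times$ commutes with the (linear and constant) time-interpolation. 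A standard diagonal argument extracts one subsequence along which all of \eqref{star:1}--\eqref{star:8} hold simultaneously. The main obstacle is the identification \eqref{star:3}, i.e.\ controlling $\vecv_{h,k}-\partial_t\vecm_{h,k}$ — this is where the renormalization step in Algorithm~\ref{Algo:1} (Step~3) and the time-step restrictions are genuinely used.
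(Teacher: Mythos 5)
Your overall strategy (the energy bound of Lemma~\ref{lem:4.1}, Banach--Alaoglu plus a compact embedding for \eqref{star:1}--\eqref{star:2}, the $O(k)$ estimate for $\vecm_{h,k}-\vecm_{h,k}^-$, Lemma~\ref{lem:3.5} for \eqref{star:5}, and weak limits for the magnetic field) is the paper's strategy, but there is a genuine gap in your identification step for \eqref{star:3}. You try to prove the strong convergence $\|\vecv_{h,k}-\partial_t\vecm_{h,k}\|_{\mL^2(D_T)}\goto 0$ and bound this quantity by $Ck^2h^{-3}\big(\sum_i k\|\vecv_h^{(i+1)}\|_{D}^2\big)\sup_i\|\vecv_h^{(i+1)}\|_{D}^2$. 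The energy estimate only gives $\sup_i\|\vecv_h^{(i+1)}\|_{D}^2\le Ck^{-1}$, so your bound is of order $kh^{-3}$ and would need $k=o(h^3)$, which is assumed nowhere. In particular, for $\theta\in(\tfrac12,1]$ (and also $\theta=\tfrac12$) the lemma imposes no coupling between $k$ and $h$, so your argument cannot close; moreover, attributing $k=o(h^2)$ and $k=o(h)$ to this step is a misreading: these conditions enter only through the positivity of the constant $C$ in Lemma~\ref{lem:4.1} (Remark~\ref{Rem:1}), and the condition $k=o(h)$ for $\theta=\tfrac12$ is needed only in Theorem~\ref{T:1}, not in Lemma~\ref{lem:4.2}.

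The paper never asks for strong $\mL^2$ control of the difference. From the nodal identity behind Step~3 one gets $\big|\tfrac{\vecm_h^{(j+1)}(\vecx_n)-\vecm_h^{(j)}(\vecx_n)}{k}-\vecv_h^{(j+1)}(\vecx_n)\big|\le\tfrac12 k|\vecv_h^{(j+1)}(\vecx_n)|^2$, and Lemma~\ref{lem:3.4} then yields the $\mL^1$ bound $\|\partial_t\vecm_{h,k}-\vecv_{h,k}\|_{\mL^1(D_T)}\le ck\|\vecv_{h,k}\|_{D_T}^2\le ck\goto 0$, with no inverse estimate at all. Pairing this $\mL^1$ bound with smooth (hence $\mL^\infty$) test functions $\vecpsi_i\in\C_0^\infty(D_T)$, and approximating an arbitrary $\vecpsi\in\mL^2(D_T)$ by such $\vecpsi_i$, gives $\inpro{\vecm_t-\vecv}{\vecpsi}_{D_T}=0$ for all $\vecpsi$, i.e.\ $\vecv=\vecm_t$, unconditionally; you should replace your $\mL^4$/inverse-estimate computation by this $\mL^1$--$\mL^\infty$ duality argument. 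A secondary, smaller point: your claim that $\|\nabla\times(\vecH_{h,k}^- -\wtd\vecH_{h,k})\|_{\mL^2(\wtd D_T)}=O(k^{1/2})$ does not follow from $\sum_i k\|d_t\vecH_h^{(i+1)}\|_{\wtd D}^2\le C$, which controls $d_t\vecH$ but not its curl; however, your alternative identification by integration by parts against $\C_0^\infty$ fields, using that $\wtd\vecH_{h,k}$ and $\vecH_{h,k}^-$ converge weakly to $\vecH$ in $\mL^2(\wtd D_T)$, does the job and is consistent with the paper's (terse) treatment of \eqref{star:6}--\eqref{star:8}.
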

\begin{proof}
\mbox{}

\noindent
\underline{Proof of~\eqref{star:1} and~\eqref{star:2}:}

Our goal is to prove  that $\{\vecm_{h,k}\}$ is bounded 
in $\mH^1(D_T,\R^3)$ and then use the Banach--Alaoglu
Theorem. We note from
Definition~\ref{Def:m} that it suffices to prove that
\begin{align}
\norm{\vecm_h^{(j)}}{D}
&\le c, \label{equ:b1} \\
\norm{\nabla\vecm_h^{(j)}}{D}
&\le c, \label{equ:b2} \\
k \sum_{j=0}^{J-1}
\bignorm{\frac{\vecm_h^{(j+1)}-\vecm_h^{(j)}}{k}}{D}^2
&\le c,  \label{equ:b3}
\end{align}
where the generic constant $c$ is independent of $j$, $h$,
and $k$.
Indeed, it follows from Definition~\ref{Def:m} and the
Cauchy--Schwarz inequality that
\begin{align*}
\left\|\vecm_{h,k}\right\|_{D_T}^2
&\leq
ck \sum_{j=0}^{J-1}
\left(
\left\|\vecm_h^{(j+1)}\right\|_{D}^2
+
\left\|\vecm_h^{(j)}\right\|_{D}^2
\right), \\
\left\|\nabla\vecm_{h,k}\right\|_{D_T}^2
&\leq
ck \sum_{j=0}^{J-1}
\left(
\left\|\nabla\vecm_h^{(j+1)}\right\|_{D}^2
+
\left\|\nabla\vecm_h^{(j)}\right\|_{D}^2
\right), \\
\left\|
\frac{\partial\vecm_{h,k}}{\partial t}
\right\|^2_{D_T}
&=
\sum_{j=0}^{J-1}k\left\|
\frac{\vecm_h^{(j+1)}-\vecm_h^{(j)}}{k}
\right\|_{D}^2.
\end{align*}

In order to prove~\eqref{equ:b1} we note that
for every $\vecx\in D$ there are at most $4$ basis functions
$\phi_{n_1}$, $\phi_{n_2}$, $\phi_{n_3}$   and 
$\phi_{n_4}$ being nonzero at $\vecx$.
This together with $|\vecm_h^{(j)}(\vecx_{n_i})|=1$
and $\sum_{i=1}^4 \phi_{n_i}(\vecx)=1$
yields
\begin{align}\label{bound:minf}
|\vecm_h^{(j)}(\vecx)|^2
=
\left|\sum_{i=1}^{4} 
\vecm_h^{(j)}(\vecx_{n_i})\phi_{n_i}(\vecx)\right|^2
\le 1.
\end{align}
This implies \eqref{equ:b1} with a constant
$c=|D|^{1/2}$
where $|D|$ is the measure of the domain~$D$.

Inequality~\eqref{equ:b2} is proved
in~Lemma~\ref{lem:4.1}. In order to prove 
inequality~\eqref{equ:b3}, we note that
Lemma~\ref{lem:3.3} and Lemma~\ref{lem:3.4} imply
\[
\left\|
\frac{\vecm_h^{(j+1)}-\vecm_h^{(j)}}{k}
\right\|_{D}
\leq
c\left\|
\vecv_h^{(j+1)}
\right\|_{D}.
\]
By using this inequality, Lemma~\ref{lem:4.1} and
Remark~\ref{Rem:1} we deduce
\[
k \sum_{j=0}^{J-1}
\bignorm{\frac{\vecm_h^{(j+1)}-\vecm_h^{(j)}}{k}}{D}^2
\leq
k \sum_{j=0}^{J-1}
c\left\|
\vecv_h^{(j+1)}
\right\|_{D}^2
\leq
c.
\]

The Banach--Alaoglu Theorem implies the existence of
a subsequence of $\{\vecm_{h,k}\}$ which converges weakly to 
a function $\vecm\in\mH ^1(D_T)$ as $k$ and $h$ tend to
zero. This implies~\eqref{star:1} and~\eqref{star:2}.

\bigskip
\noindent
\underline{Proof of~\eqref{star:3}:}

From~\eqref{E:4.1} and and Remark~\ref{Rem:1}, it 
is straightforward to show that $\{\vecv_{h,k}\}$ is 
bounded in $\mL^2(D_T)$. Hence, there exists a subsequence 
of $\{\vecv_{h,k}\}$ which converges weakly to a function 
$\vecv\in\mL ^2(D_T)$. 
The problem reduces to proving that $\vecm_t$  equals 
$\vecv$ in $\mL^2(D_T)$. In order to show this we
choose for each $\vecpsi\in \mL^2(D_T)$ a sequence
$\{\vecpsi_i\}\in\C_0^\infty(D_T)$ converging to $\vecpsi$ 
in $\mL^2(D_T)$ as $i$ tends to infinity.
We then have
\begin{align}
|\inpro{\vecm_t-\vecv}{\vecpsi}_{D_T}|
&\leq
|\inpro{\vecm_t-\vecv}
{\vecpsi_i-\vecpsi}_{D_T}|
+
\left|\inpro{\vecm_t-\frac{\partial\vecm_{h,k}}{\partial t}}
{\vecpsi_i}_{D_T}\right| \nn\\
&\quad+
\left|\inpro{\frac{\partial\vecm_{h,k}}{\partial t}-\vecv_{h,k}}
{\vecpsi_i}_{D_T}\right|
+
|\inpro{\vecv_{h,k}-\vecv}
{\vecpsi_i}_{D_T} \nn\\
&\leq
\left\|\vecm_t-\vecv\right\|_{D_T}
\left\|\vecpsi_i-\vecpsi\right\|_{D_T}
+
\left|\inpro{\vecm_t-\frac{\partial\vecm_{h,k}}{\partial t}}
{\vecpsi_i}_{D_T}\right| \nn\\
&\quad+
\left\|\frac{\partial\vecm_{h,k}}{\partial t}-\vecv_{h,k}\right\|_{\mL^1(D_T)}
\left\|\vecpsi_i\right\|_{\mL^\infty(D_T)}
+
|\inpro{\vecv_{h,k}-\vecv}
{\vecpsi_i}_{D_T}| \nn\\
&=:
T_1 + \cdots + T_4. \label{equ:mt}
\end{align}
By letting $h,k\goto 0$ and then $i\goto\infty$ we have
$T_i\goto0$ for $i=1,2$ and $4$. It remains to show that
$T_3\goto0$.

It is clear from the definition of $\vecm_h^{(j+1)}$ in 
Algorithm~\ref{Algo:1} that 
\begin{equation}\label{E:4.8}
\left|
\vecm_h^{(j+1)}(\vecx_n)
-
\vecm_h^{(j)}(\vecx_n)
-k\vecv_h^{(j+1)}(\vecx_n)
\right|
=
\left|
\vecm_h^{(j)}(\vecx_n)+k\vecv_h^{(j+1)}(\vecx_n)
\right|
-1.
\end{equation}
It easily follows from $\left|\vecm_h^{(j)}(\vecx_n)\right|=1$ and 
$\vecv_h^{(j+1)}(\vecx_n)\cdot\vecm_h^{(j)}(\vecx_n)=0$ that 
\begin{equation*}
\left|
\vecm_h^{(j)}(\vecx_n)+k\vecv_h^{(j+1)}(\vecx_n)
\right|
\leq
\frac{1}{2}k^2\left|\vecv_h^{(j+1)}(\vecx_n)\right|^2 +1.
\end{equation*}
The above inequality and \eqref{E:4.8} yield
\[
\left|
\frac{\vecm_h^{(j+1)}(\vecx_n)-\vecm_h^{(j)}(\vecx_n)}{k}
-\vecv_h^{(j+1)}(\vecx_n)
\right|
\leq
\frac{1}{2}k\left|\vecv_h^{(j+1)}(\vecx_n)\right|^2.
\]
By using Lemma~\ref{lem:3.4} we deduce 
\begin{align*}
\left\|
\frac{\partial\vecm_{h,k}}{\partial t}(t)
-
\vecv_{h,k}(t)
\right\|_{\mL^1(D)}
\leq
ck
\left\|
\vecv_{h,k}(t)
\right\|^2_{D}
\quad\text{for }t\in [t_j,t_{j+1}).
\end{align*}
Integrating both sides of this inequality with respect 
to $t$ over an interval $[t_j,t_{j+1})$  and 
summing over $j$ from $0$ to $J-1$ yield, noting the
boundedness of $\{\norm{\vecv_{h,k}}{D_T}\}$,
\begin{equation*}
\left\|
\frac{\partial\vecm_{h,k}}{\partial t}
-
\vecv_{h,k}
\right\|_{\mL^1(D_T)}
\leq
ck
\left\|
\vecv_{h,k}
\right\|^2_{D_T}
\le
ck \goto 0 \quad\text{as } h,k \goto 0.
\end{equation*}
Thus $T_3\goto0$ as $h,k\goto0$ and $i\goto\infty$. It
follows from~\eqref{equ:mt} that
\[
|\inpro{\vecm_t-\vecv}{\vecpsi}_{D_T}| = 0
\quad\forall\vecpsi\in\mL^2(D_T).
\]
This proves \eqref{star:3}.

\bigskip
\noindent
\underline{Proof of~\eqref{star:4}:}

It is clear from the definition of $\vecm_{h,k}^-$ and $\vecm_{h,k}$ that for $t\in [t_j,t_{j+1})$ there holds
\begin{equation*}
\|
\vecm_{h,k}(t)-\vecm_{h,k}^-(t)
\|_D
=
\left\|
(t-t_j)
\frac{\vecm_h^{(j+1)}-\vecm_h^{(j)}}{k}
\right\|_D
\leq
k\left\|
\frac{\partial(\vecm_{h,k}(t,x))}{\partial t}
\right\|_D.
\end{equation*}
Integrating both sides of this inequality with respect to $t$ over an interval $[t_j,t_{j+1})$  and summing over $j$ from $0$ to $(J-1)$ yield
\begin{equation*}
\left\|
\vecm_{h,k}-\vecm_{h,k}^-
\right\|_{D_T}
\leq
k\left\|
\frac{\partial\vecm_{h,k}}{\partial t}
\right\|_{D_T}
\le
ck \goto 0 \text{ as } h,k\goto 0.
\end{equation*}
The above result and~\eqref{star:1}
imply~\eqref{star:4}.

\bigskip
\noindent
\underline{Proof of~\eqref{star:5}:}

Using Lemma~\ref{lem:3.5} and noting that $|\vecm_h^{(j)}(\vecx_n)|=1$ for $n=1,\cdots,N$, we deduce
\begin{equation*}
\left|
|\vecm_h^{(j)}(\vecx)|-1
\right|^2
\leq
Ch^2
\left|
\nabla\vecm_h^{(j)}(\vecx)
\right|^2
\quad\text{for all } \vecx\in D.
\end{equation*}
Integrating both sides of the above inequality on 
$[t_j,t_{j+1})\times D$, using~Lemma~\ref{lem:4.1}
and noting Remark~\ref{Rem:1}, we obtain
\begin{equation*}
\int_{t_j}^{t_{j+1}}\int_{D}\left|1-|\vecm_h^{(j)}(\vecx)|\right|^2\dvx\dt
\leq
ch^2
\int_{t_j}^{t_{j+1}}
\left\|
\nabla\vecm_h^{(j)}
\right\|^2_{D}
\leq
ckh^2.
\end{equation*}
Hence
\begin{equation*}
\int_{D_T}\left|1-|\vecm_{h,k}^-|\right|^2\dvx\dt
\goto 0 \text{ as } h,k\goto 0.
\end{equation*}
We infer from \eqref{star:4} that
\[
|\vecm|=1 \text{ a.e. in } D_T.
\]

\bigskip
\noindent
\underline{Proof of~\eqref{star:6},~\eqref{star:7} and~\eqref{star:8}:}

By using the same arguments as above, we obtain these
results, completing the proof of the lemma.
\end{proof}

We are now able to prove the main result of this paper.
\begin{theorem}\label{T:1}
Assume that $h$ and $k$ go to 0 with the following
conditions
\begin{equation}\label{equ:theta}
\begin{cases}
k = o(h^2) & \quad\text{when } 0 \le \theta < 1/2, \\
k = o(h) & \quad\text{when } \theta = 1/2, \\
\text{no condition} & \quad\text{when } 1/2<\theta\le1.
\end{cases}
\end{equation}
Then the limits
$\left(\vecm,\vecH\right)$ given by Lemma~\ref{lem:4.2}
is  a weak solution of 
the MLLG equations~\eqref{wE:LLG2}--\eqref{wE:Maxwell2}.
\end{theorem}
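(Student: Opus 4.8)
\emph{Strategy.} The plan is to reinterpret the discrete identities \eqref{disE:LLG}--\eqref{disE:NewMax2}, summed over the time levels, as integral identities in the time interpolants of Definition~\ref{Def:m}, and then to let $h,k\to0$ along the subsequence extracted in Lemma~\ref{lem:4.2}. Properties~1,~2 and the constraint $|\vecm|=1$ of Definition~\ref{Def:WeakSo1} are \eqref{star:1}--\eqref{star:8}; it remains to pass to the limit in \eqref{wE:LLG2}--\eqref{wE:Maxwell2}, to verify $\vecm(0,\cdot)=\vecm_0$, and to establish the energy inequality \eqref{InE:Energy}.

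\emph{The Landau--Lifshitz--Gilbert equation.} Fix $\vecphi\in\mathbb C^\infty(D_T)$. For each $j$ we test \eqref{disE:LLG} with $\vecw_h^{(j)}:=I_{\mV_h}\big(\vecm_h^{(j)}\times\vecphi(t_j,\cdot)\big)$; since $\vecw_h^{(j)}(\vecx_n)=\vecm_h^{(j)}(\vecx_n)\times\vecphi(t_j,\vecx_n)$ is orthogonal to $\vecm_h^{(j)}(\vecx_n)$, this is an admissible test function in $\mW_h^{(j)}$. Multiplying by $k$, summing over $j=0,\dots,J-1$, using the cyclic identity $\vecv\cdot(\vecm\times\vecphi)=-(\vecm\times\vecv)\cdot\vecphi$, and rewriting the Riemann sums via Definition~\ref{Def:m}, we obtain
\begin{align*}
&-\lambda_2\inpro{\vecm_{h,k}^-\times\vecv_{h,k}}{\vecphi}_{D_T}
+\lambda_1\inpro{\vecv_{h,k}}{\vecm_{h,k}^-\times(\vecm_{h,k}^-\times\vecphi)}_{D_T} \\
&\qquad=
-\mu\inpro{\nabla\vecm_{h,k}^-}{\nabla(\vecm_{h,k}^-\times\vecphi)}_{D_T}
+\mu\inpro{\wtd\vecH_{h,k}}{\vecm_{h,k}^-\times\vecphi}_{D_T}
+R_{h,k},
\end{align*}
where $R_{h,k}$ collects (i) the time-discretization error from replacing $\vecphi(t_j,\cdot)$ by $\vecphi(t,\cdot)$, which is $O(k)$ because $\vecphi$ is smooth and $\{\vecv_{h,k}\}$, $\{\wtd\vecH_{h,k}\}$ are bounded in $\mL^2$; (ii) the nodal interpolation errors of the form $\inpro{\,\cdot\,}{(I_{\mV_h}-\mathrm{id})(\vecm_{h,k}^-\times\vecphi)}$ in the four inner products, which are $O(h)$ since $\vecm_{h,k}^-$ is piecewise linear and $\|\nabla\vecm_{h,k}^-\|_{\mL^2(D_T)}\le c$; and (iii) the stiffness contribution $-\mu k\theta\inpro{\nabla\vecv_{h,k}}{\nabla I_{\mV_h}(\vecm_{h,k}^-\times\vecphi)}_{D_T}$. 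Term (iii) is the only one where the mesh conditions \eqref{equ:theta} enter: it is bounded (up to the factor $\theta\le1$) by $ck\|\nabla\vecv_{h,k}\|_{\mL^2(D_T)}$, and for fixed $\theta\in(1/2,1]$ the inequality $k^2\sum_{j}\|\nabla\vecv_h^{(j+1)}\|_D^2\le c$ read off from \eqref{E:4.7} gives the bound $ck^{1/2}$, whereas for $\theta\le1/2$ the inverse estimate together with Lemma~\ref{lem:4.1} gives the bound $ckh^{-1}$; in both cases (iii)$\to0$ precisely under \eqref{equ:theta} (recall Remark~\ref{Rem:1}). Hence $R_{h,k}\to0$.

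Now we let $h,k\to0$. The decisive point is that the potentially dangerous product of two merely weakly convergent sequences does not occur: writing $\partial_i(\vecm_{h,k}^-\times\vecphi)=(\partial_i\vecm_{h,k}^-)\times\vecphi+\vecm_{h,k}^-\times\partial_i\vecphi$ and using $\partial_i\vecm_{h,k}^-\cdot\big((\partial_i\vecm_{h,k}^-)\times\vecphi\big)=0$ pointwise, the stiffness term equals $-\mu\inpro{\nabla\vecm_{h,k}^-}{\vecm_{h,k}^-\times\nabla\vecphi}_{D_T}$ (sum over $i$). From \eqref{star:4}, the bound $|\vecm_{h,k}^-|\le1$ and \eqref{star:5} one gets the strong $\mL^2(D_T)$ convergences $\vecm_{h,k}^-\times\vecphi\to\vecm\times\vecphi$, $\vecm_{h,k}^-\times\nabla\vecphi\to\vecm\times\nabla\vecphi$ and $\vecm_{h,k}^-\times(\vecm_{h,k}^-\times\vecphi)\to\vecm\times(\vecm\times\vecphi)$; combined with the weak convergences $\vecv_{h,k}\rightharpoonup\vecm_t$ (\eqref{star:3}), $\nabla\vecm_{h,k}^-\rightharpoonup\nabla\vecm$ (from \eqref{star:4} and \eqref{equ:b2}), and $\wtd\vecH_{h,k}\rightharpoonup\vecH$ (\eqref{star:6}), every inner product above converges. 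Using $|\vecm|=1$ and $\vecm\cdot\vecm_t=0$ a.e.\ (both from \eqref{star:5}) the double cross term collapses, $\lambda_1\inpro{\vecm_t}{\vecm\times(\vecm\times\vecphi)}_{D_T}=-\lambda_1\inpro{\vecm_t}{\vecphi}_{D_T}$, and, by the same pointwise identity applied to $\vecm$, $\inpro{\nabla\vecm}{\vecm\times\nabla\vecphi}_{D_T}=\inpro{\nabla\vecm}{\nabla(\vecm\times\vecphi)}_{D_T}$ and $\inpro{\vecH}{\vecm\times\vecphi}_{D_T}=-\inpro{\vecm\times\vecH}{\vecphi}_{D_T}$. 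Collecting and multiplying by $-1$ yields exactly \eqref{wE:LLG2}.

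\emph{Maxwell's equation, initial and energy conditions.} For $\veczeta\in\mathbb C^\infty(\wtd D_T)$ we test \eqref{disE:NewMax2} with $\veczeta_h:=I_{\mY_h}\veczeta(t_j,\cdot)$, multiply by $k$ and sum; the Nedelec interpolation error $\veczeta-I_{\mY_h}\veczeta\to0$ in $\mL^2(\wtd D)$, while the commuting-diagram property of edge elements gives $\nabla\times I_{\mY_h}\veczeta\to\nabla\times\veczeta$ in $\mL^2(\wtd D)$, and the $O(k)$ time-discretization error is harmless as before. Passing to the limit using \eqref{star:3}, \eqref{star:6} and \eqref{star:7} (and that $\vecv_{h,k}$, extended by zero, converges weakly to $\wtd\vecm_t$) gives \eqref{wE:Maxwell2}. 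For the initial condition, $\vecm_{h,k}(0,\cdot)=I_{\mV_h}\vecm_0\to\vecm_0$ in $\mL^2(D)$ while $\vecm_{h,k}\rightharpoonup\vecm$ in $\mH^1(0,T;\mL^2(D))\hookrightarrow C([0,T];\mL^2(D))$, so the weakly continuous evaluation at $t=0$ forces $\vecm(0,\cdot)=\vecm_0$. Finally, for the energy inequality we multiply \eqref{E:4.1} — with $j$ the index of the subinterval containing $t$ — by an arbitrary $\phi\in C^\infty_0(0,T)$ with $\phi\ge0$, integrate in $t$, rewrite the discrete sums through the time interpolants (noting that $\cE_h^{(j)}=\|\nabla\vecm_{h,k}^-(t)\|_D^2+\|\vecH_{h,k}^-(t)\|_{\wtd D}^2+\lambda_2\mu^{-1}\mu_0^{-1}\sigma\|\nabla\times\vecH_{h,k}^-(t)\|_{\wtd D}^2$ and that $\vecH_{h,k}^-\rightharpoonup\vecH$ in $\mL^2(\wtd D_T)$), and pass to the limit using weak lower semicontinuity of the $\mL^2$-norms, the weak convergences of Lemma~\ref{lem:4.2}, the convergence $\cE_h^0\to\cE(0)$, and $C\to\lambda_2\mu^{-1}$; localising in $\phi$ yields \eqref{InE:Energy} for a.e.\ $T'$. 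The main obstacle throughout is the control of term (iii) — this is exactly where the conditions \eqref{equ:theta} are forced — whereas the pointwise vector identity $a\cdot(a\times b)=0$ is what renders the limit of the stiffness term tractable.
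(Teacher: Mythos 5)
Your proposal is correct and follows essentially the same route as the paper: test \eqref{disE:LLG}--\eqref{disE:NewMax2} with $I_{\mV_h}(\vecm_{h,k}^-\times\vecphi)$ and $I_{\mY_h}\veczeta$, pass to the limit using Lemma~\ref{lem:4.2} together with the interpolation estimates, control the $k\theta$-stiffness term via \eqref{E:4.7} for $\theta\in(1/2,1]$ and the inverse estimate for $\theta\le 1/2$ (exactly where \eqref{equ:theta} enters), and obtain the initial condition and the energy inequality from weak continuity of the trace and weak lower semicontinuity of norms in \eqref{E:4.1}. Your only deviations are cosmetic: you use piecewise-constant-in-time test functions with an $O(k)$ Riemann-sum error instead of integrating $\vecphi(t,\cdot)$ over each subinterval, and you spell out the pointwise cancellation $\partial_i\vecm\cdot\bigl(\partial_i\vecm\times\vecphi\bigr)=0$ behind \eqref{Conver:3}, which the paper leaves implicit.
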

\begin{proof}
For any $\vecphi\in\C^{\infty}(D_T)$, 
$\veczeta \in\C^{\infty}(\wtd D_T)$, and 
$t\in [t_j,t_{j+1})$, we define
\[
\vecw_{h,k}(t,\cdot)
:=
I_{\mV_h}(\vecm_{h,k}^-\times\vecphi(t,\cdot))
\quad\text{and}\quad
\veczeta_h(t,\cdot):=I_{\mY_h}(\veczeta(t,\cdot)).
\]
In equations~\eqref{disE:LLG} and \eqref{disE:NewMax2},
replacing $\vecw_h^{(j)}$ and $\veczeta_h$ by 
$\vecw_{h,k}(t)$ and $\veczeta_h(t)$,
respectively,
and using Definition~\ref{Def:m}, 
we rewrite~\eqref{disE:LLG}--\eqref{disE:NewMax2} as
\begin{align*}
&-\lambda_1
\inpro{\vecm_{h,k}^{-}(t) \times\vecv_{h,k}(t)}{\vecw_{h,k}(t)}_{D}
+
\lambda_2
\inpro{\vecv_{h,k}(t)}{\vecw_{h,k}(t)}_{D}
\nn\\
&\quad=-\mu 
\inpro{\nabla (\vecm_{h,k}^{-}(t)+k\theta \vecv_{h,k}(t))}
{\nabla\vecw_{h,k}(t)}_{D} 
+
\mu
\inpro{\wtd\vecH_{h,k}(t)}{\vecw_{h,k}(t)}_{D},
\end{align*}
and
\[
\mu_0\inpro{\frac{\partial\vecH_{h,k}}{\partial t}(t)}{\veczeta_h(t)}_{\wtd D}
+\sig\inpro{\nabla\times\wtd\vecH_{h,k}(t)}{\nabla\times\veczeta_h(t)}_{\wtd D}
=-\mu_0\inpro{\vecv_{h,k}(t)}{\veczeta_h(t)}_{\wtd D}.
\]
Integrating both sides of these equations with respect 
to $t$ over an interval $[t_j,t_{j+1})$  and summing 
over $j$ from $0$ to $J-1$ yield
\begin{align} \label{InE:10}
&-\lambda_1
\inpro{\vecm_{h,k}^{-} \times\vecv_{h,k}}{\vecw_{h,k}}_{D_T}
+
\lambda_2
\inpro{\vecv_{h,k}}{\vecw_{h,k}}_{D_T}
\nn\\
&\quad=-\mu 
\inpro{\nabla (\vecm_{h,k}^{-}+k\theta \vecv_{h,k})}
{\nabla\vecw_{h,k}}_{D_T} 
+
\mu
\inpro{\wtd\vecH_{h,k}}{\vecw_{h,k}}_{D_T} 
\end{align}
and
\begin{equation}\label{equ:InE:11}
\mu_0\inpro{\frac{\partial\vecH_{h,k}}{\partial t}}{\veczeta_h}_{\wtd D_T}
+\sig\inpro{\nabla\times\wtd\vecH_{h,k}}{\nabla\times\veczeta_h}_{\wtd D_T}
=-\mu_0\inpro{\vecv_{h,k}}{\veczeta_h}_{\wtd D_T}.
\end{equation}

In order to prove that $\vecm$ and $\vecH$
satisfy~\eqref{wE:LLG2} and~\eqref{wE:Maxwell2},
respectively, we prove that
as $h$ and $k$ tend to $0$ there hold
\begin{align}
\inpro{\vecm_{h,k}^{-} \times\vecv_{h,k}}{\vecw_{h,k}}_{D_T}
&\goto
\inpro{\vecm \times\vecm_t}{\vecm\times\vecphi}_{D_T},\label{Conver:1}\\
\inpro{\vecv_{h,k}}{\vecw_{h,k}}_{D_T}
&\goto
\inpro{\vecm_t}{\vecm\times\vecphi}_{D_T},\label{Conver:2}\\
\inpro{\nabla \vecm_{h,k}^{-}}
{\nabla\vecw_{h,k}}_{D_T} 
&\goto
\inpro{\nabla \vecm}
{\nabla(\vecm\times\vecphi)}_{D_T}, \label{Conver:3}\\
k\inpro{\nabla \vecv_{h,k}}
{\nabla\vecw_{h,k}}_{D_T} 
&\goto
0, \label{Conver:4}\\
\inpro{\wtd\vecH_{h,k}}{\vecw_{h,k}}_{D_T}
&\goto
\inpro{\vecH}{\vecm\times\vecphi}_{D_T},\label{Conver:5}
\end{align}
and
\begin{align}
\inpro{\frac{\partial\vecH_{h,k}}{\partial t}}{\veczeta_h}_{\wtd D_T}
&\goto
\inpro{\vecH_t}{\veczeta}_{\wtd D_T},\label{Conver:6}\\
\inpro{\nabla\times\wtd\vecH_{h,k}}{\nabla\times\veczeta_h}_{\wtd D_T}
&\goto
\inpro{\nabla\times\vecH}{\nabla\times\veczeta}_{\wtd D_T},\label{Conver:7}\\
\inpro{\vecv_{h,k}}{\veczeta_h}_{\wtd D_T}
&\goto
\inpro{\vecm_t}{\veczeta}_{\wtd D_T}.\label{Conver:8}
\end{align}
We now prove~\eqref{Conver:1} and~\eqref{Conver:4}; 
the others can be obtained in the same manner.

Using the triangular inequality and Holder's inequality, we estimate
\[I_{h,k}
:=
\left|\inpro{\vecm_{h,k}^{-} \times\vecv_{h,k}}{\vecw_{h,k}}_{D_T}
-
\inpro{\vecm \times\vecm_t}{\vecm\times\vecphi}_{D_T}\right|
\]
as follows:
\begin{align*}
 I_{h,k}
&\leq
\left|\inpro{\vecm_{h,k}^{-} 
\times\vecv_{h,k}}{\vecw_{h,k}-\vecm_{h,k}^{-}\times\vecphi}_{D_T}\right|
+
\left|\inpro{\vecm_{h,k}^{-} 
\times\vecv_{h,k}}{(\vecm_{h,k}^{-}-\vecm)\times\vecphi}_{D_T}\right|\\
&\quad+
\left|\inpro{(\vecm_{h,k}^{-}-\vecm) 
\times\vecv_{h,k}}{\vecm\times\vecphi}_{D_T}\right|
+
\left|\inpro{\vecm 
\times(\vecv_{h,k}-\vecm_t)}{\vecm\times\vecphi}_{D_T}\right|\\
&\leq
\|\vecm_{h,k}^{-}\|_{\mL^{\infty}(D_T)}
\|\vecv_{h,k}\|_{D_T}
\|\vecw_{h,k}-\vecm_{h,k}^{-}\times\vecphi\|_{D_T}\\
&\quad+
\|\vecm_{h,k}^{-}\|_{\mL^{\infty}(D_T)}
\|\vecv_{h,k}\|_{D_T}
\|\vecm-\vecm_{h,k}^{-}\|_{D_T}
\|\vecphi\|_{\mL^{\infty}(D_T)}\\
&\quad+
\|\vecm-\vecm_{h,k}^{-}\|_{D_T}
\|\vecv_{h,k}\|_{D_T}
\|\vecphi\|_{\mL^{\infty}(D_T)}\\
&\quad+
\|\vecv_{h,k}-\vecm_t\|_{D_T}
\|\vecphi\|_{\mL^{\infty}(D_T)} \\
&\le
c\left(
\|\vecw_{h,k}-\vecm_{h,k}^{-}\times\vecphi\|_{D_T}
+
\|\vecm-\vecm_{h,k}^{-}\|_{D_T}
+
\|\vecv_{h,k}-\vecm_t\|_{D_T}
\right),
\end{align*}
where we have used \eqref{bound:minf} and
Lemma~\ref{lem:4.1}, noting Remark~\ref{Rem:1}.
The interpolation operators $I_{\mV_h}$ and $I_{\mY_h}$
have the following properties (see e.g., \cite{Braess07} 
and~\cite{Monk03})
\begin{equation}\label{equ:int ope}
\begin{aligned}
&\|\vecm_{h,k}^-\times\vecphi-\vecw_{h,k}\|_{\mL^2([0,T],\mH^1(D))}
\le
Ch
\|\vecm_{h,k}^-\|_{\mH^1(D_T)}\|\vecphi\|_{\mW^{2,\infty}(D_T)},
\\
&\|\veczeta(t)-\veczeta_h(t)\|_{\wtd D}
+
\|\nabla\times(\veczeta(t)-\veczeta_h(t))\|_{\wtd D}
\le
Ch
\|\nabla^2\veczeta\|_{\wtd D}.
\end{aligned}
\end{equation}
This implies
\[
\lim_{k,h\goto 0} I_{h,k} = 0,
\]
proving~\eqref{Conver:1}.

In order to prove~\eqref{Conver:4} we first note that
\begin{align*}
\|\nabla\vecw_{h,k}\|_{D_T}
&\leq
\|\nabla(\vecm_{h,k}^-\times\vecphi-\vecw_{h,k})\|_{D_T}
+
\|\nabla(\vecm_{h,k}^-\times\vecphi)\|_{D_T} \\
&\leq
ch
\|\vecm_{h,k}^-\|_{\mH^1(D_T)}
\|\vecphi\|_{\mW^{2,\infty}(D_T)}
+
\|\nabla\vecm_{h,k}^-\|_{D_T}
\|\nabla\vecphi\|_{\mL^{\infty}(D_T)} \\
&\leq
c\|\vecphi\|_{\mW^{2,\infty}(D_T)},
\end{align*}
where we have used~\eqref{equ:int ope} and the
boundedness of $\norm{\vecm_{h,k}^-}{\mH^1(D_T)}$.
Now using Holder's inequality we obtain
\begin{equation}\label{Conver:4.1}
k\inpro{\nabla \vecv_{h,k}}
{\nabla\vecw_{h,k}}_{D_T} 
\leq
ck
\|\nabla\vecv_{h,k}\|_{D_T}.
\end{equation}
It is straightforward from~\eqref{E:4.7} that
$\{\|\nabla\vecv_{h,k}\|_{D_T}\}$
is bounded when $\theta\in(\frac{1}{2},1]$. 
Hence, taking the limit as $k$ and $h$ tend to $0$ 
in~\eqref{Conver:4.1} yields~\eqref{Conver:4} for these
values of~$\theta$.

When $\theta\in[0,\frac{1}{2}]$, using the inverse estimate we obtain
\begin{equation*}
\sum_{i=0}^{j-1} \|\nabla\vecv^{(i+1)}_h\|^2_{D}
\leq
ch^{-2}\sum_{i=0}^{j-1} \|\vecv^{(i+1)}_h\|^2_{D},
\end{equation*}
or equivalently,
\begin{equation*}
\|\nabla\vecv_{h,k}\|_{D_T}
\leq
ch^{-1}\|\vecv_{h,k}\|_{D_T}.
\end{equation*}
Hence under the assumption~\eqref{equ:theta}, the
inequality \eqref{Conver:4.1} becomes
\[
k\inpro{\nabla \vecv_{h,k}}
{\nabla\vecw_{h,k}}_{D_T} 
\leq
ckh^{-1}\|\vecv_{h,k}\|_{D_T}
\le
ckh^{-1}
\]
when $\theta\in[0,1/2]$. Therefore, under the
assumption~\eqref{equ:theta} there holds
\[
k\inpro{\nabla \vecv_{h,k}}
{\nabla\vecw_{h,k}}_{D_T} 
\goto
0. 
\]

We now prove~\eqref{equ:m0}.
Since $\vecm_h^0=I_{\mV_h}(\vecm_0)$, the sequence $\{\vecm_h^0\}$ converges to $\vecm_0$ in $\mL^2(D)$ as $h$ tends to $0$. Using the weak continuity of the trace operator we obtain that $\vecm(0,\cdot)=\vecm_0$ in the sense of traces.

Finally, applying weak lower semicontinuity of norms in inequality \eqref{E:4.1} we obtain the energy inequality~\eqref{InE:Energy}, which completes the proof.
\end{proof}

\section{Numerical experiments }\label{Sec:Simu}
In order to carry out physically relevant experiments,
the initial fields $\vecm_0$, $\vecH_0$ must satisfy 
condition~\eqref{E:Cond2}. This can be achieved by taking
\begin{equation*}
\vecH_0 =\vecH_0^* -  \chi_D\vecm_0,
\end{equation*}
where $\dive \vecH_0^* = 0$ in $\wtd D$. 
In our experiment, for simplicity, we choose
$\vecH_0^*$ to be a constant.
We solve an academic example with $D=\wtd D=(0,1)^3$ and 
\begin{align*}
\vecm_0(\vecx)
&=
\begin{cases}
(0,0,-1), &\quad |\vecx^*|\geq \frac{1}{2},\\
(2\vecx^*A,A^2-|\vecx^*|^2)/(A^2+|\vecx^*|^2),
&\quad |\vecx^*|\leq \frac{1}{2},
\end{cases}\\
\vecH_0^*(\vecx)
&=
(0,0,H_s),\quad\vecx\in \wtd D,
\end{align*}
where $\vecx=(x_1,x_2,x_3)$, $\vecx^*=(x_1-0.5,x_2-0.5,0)$ and $A=(1-2|\vecx^*|)^4/4$.
The constant $H_s$ represents the strength of $\vecH_0$
in the  $x_3$-direction. We compute the experiments for
$H_s=0$, $\pm 30$, $\pm 100$ and $\pm 1000$.
We set the values for the other parameters
in~\eqref{E:LL} and~\eqref{E:Maxwell2} as
$\lam_1=\lam_2=\mu_0=\sig=1$.

The domain $D$ is partitioned into uniform cubes with
the mesh size $h=1/2^3$, where each cube consists of 
six tetrahedra. We choose the time step $k=10^{-3}$ and
the parameter $\theta$ in Algorithm~\ref{Algo:1} to be
$0.7$.
The construction of the basis functions for $\mW_h^{(j)}$ 
and $\mY_h$ in this algorithm is discussed
in~\cite{LeTran_2012}. At each iteration we need to
solve a linear system of size $(2N+M)\times(2N+M)$,
recalling that $N$ is the number of vertices and $M$ is
the number of edges in the triangulation. The code is
written in Fortran90.

The evolution of 
$\|\nabla\vecm_{h,k}\|_D$, $\|\vecH_{h,k}\|_{\wtd D}$ 
and $\|\nabla\times\vecH_{h,k}\|_{\wtd D}$  are 
depicted in Figures~\ref{Fig:Eex}, \ref{Fig:Eh} 
and~\ref{Fig:Ee}, respectively.
Figure~\ref{fig:Etotal} shows that the solution
satisfies condition~\eqref{InE:Energy} in
Definition~\ref{Def:WeakSo1}.
\begin{figure}
\centering
\includegraphics[width=15cm,height=9cm]{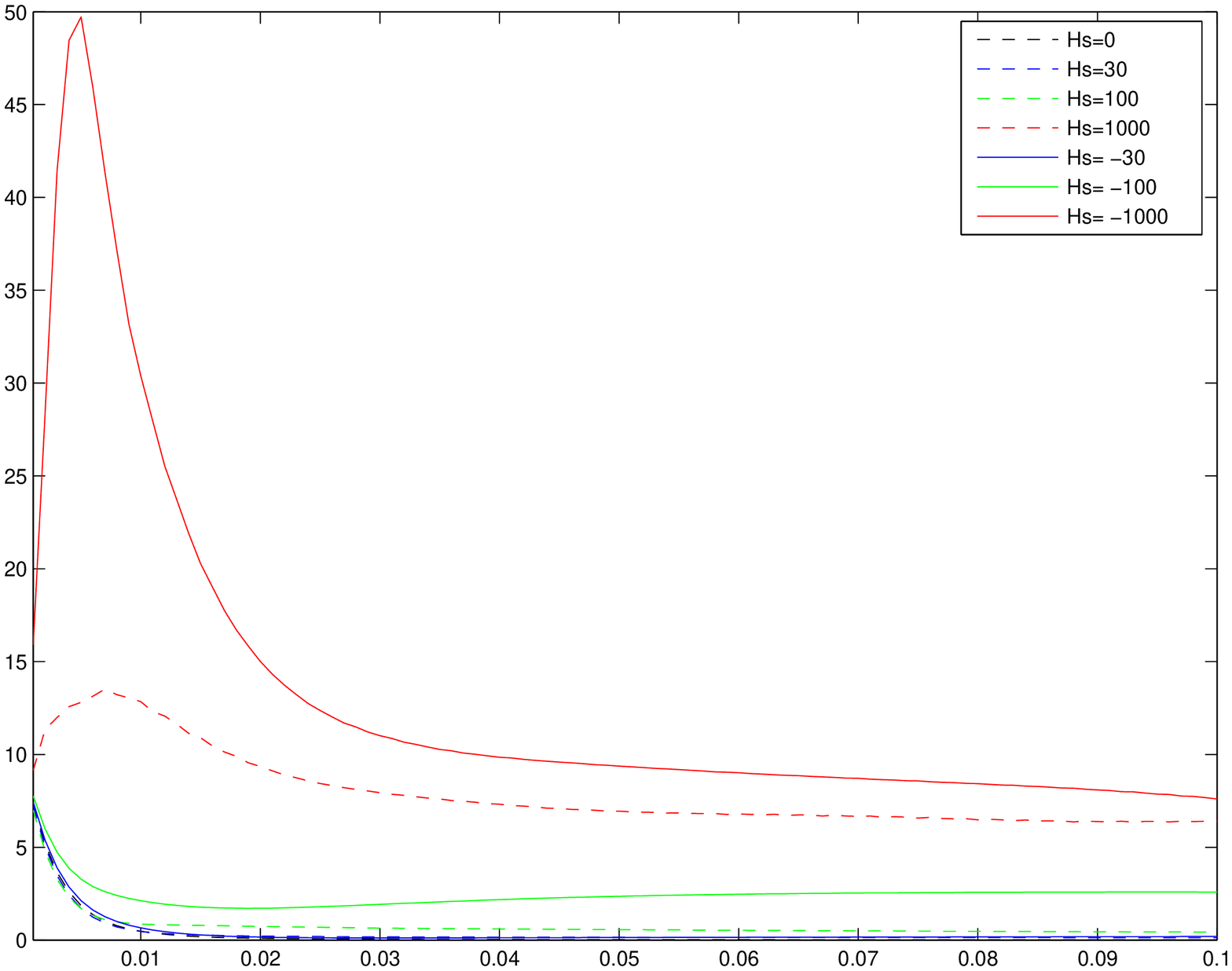}
\caption{Plot of $t\mapsto\|\nabla\vecm_{h,k}(t)\|_D$ .}\label{Fig:Eex}
\end{figure}
\begin{figure}
\centering
\includegraphics[width=15cm,height=9cm]{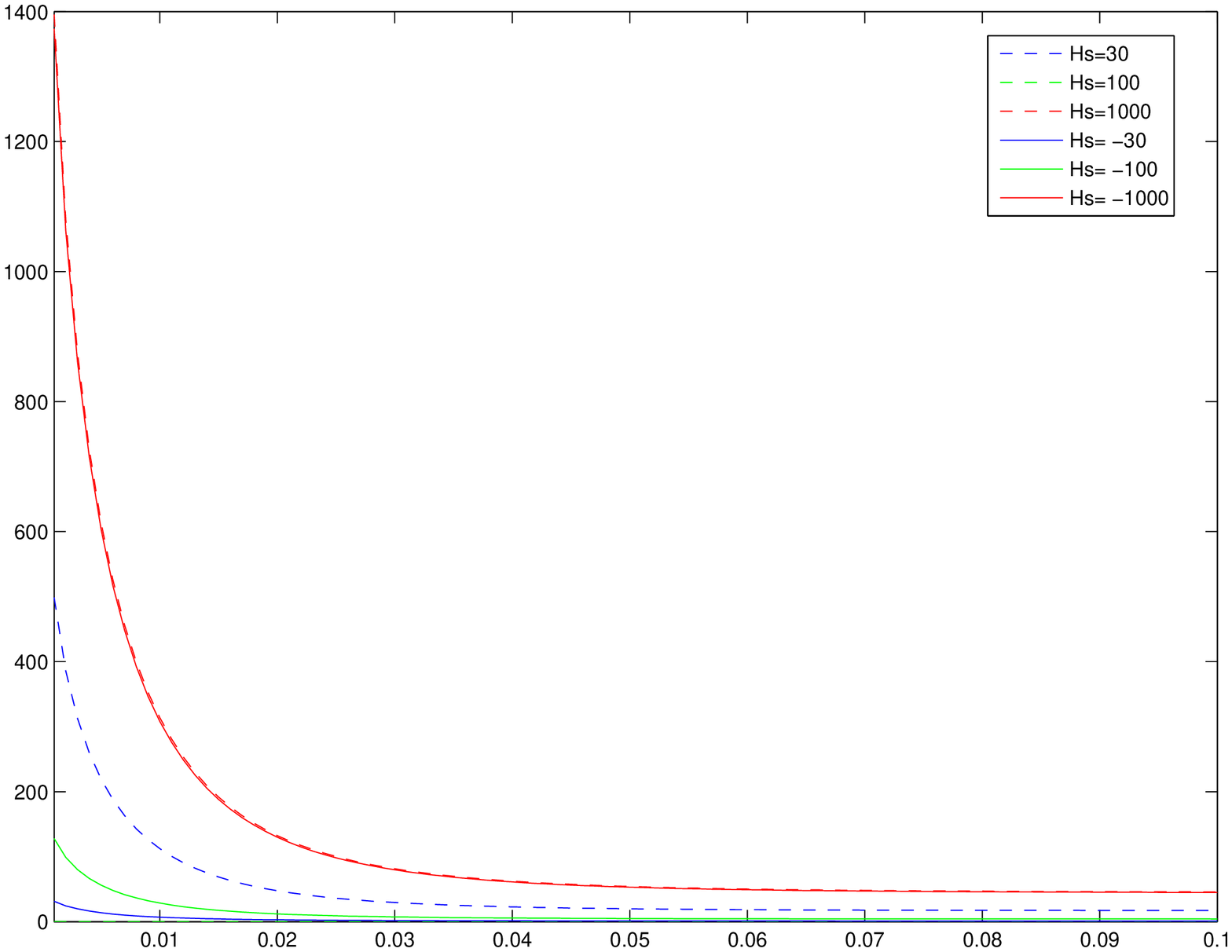}
\caption{Plot of $t\mapsto\|\vecH_{h,k}(t)\|_{\wtd D}$ .}\label{Fig:Eh}
\end{figure}
\begin{figure}
\centering
\includegraphics[width=15cm,height=9cm]{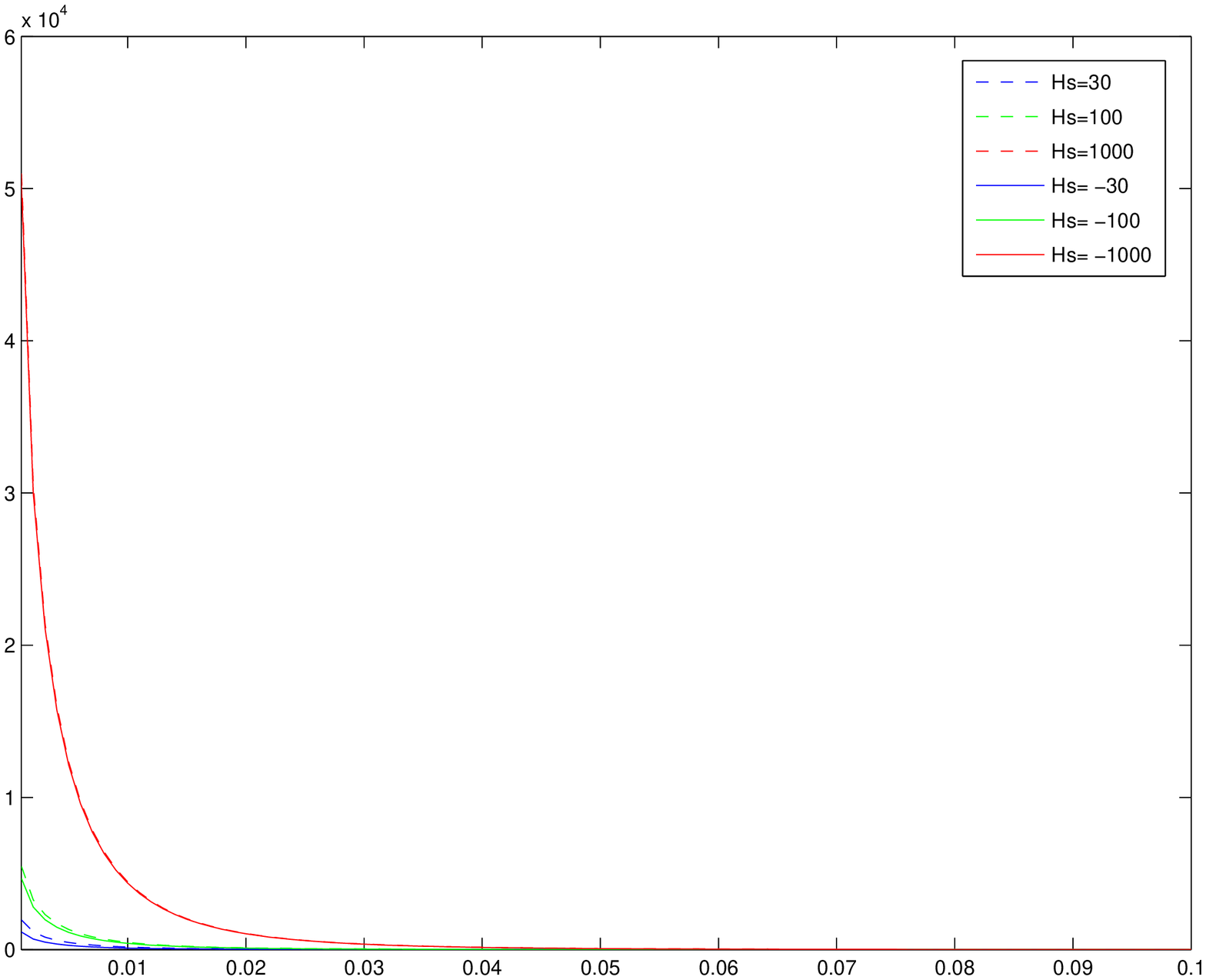}
\caption{Plot of $t\mapsto\|\nabla\times\vecH_{h,k}(t)\|_{\wtd D}$ .}\label{Fig:Ee}
\end{figure}
\begin{figure}
\centering
\includegraphics[width=15cm,height=9cm]{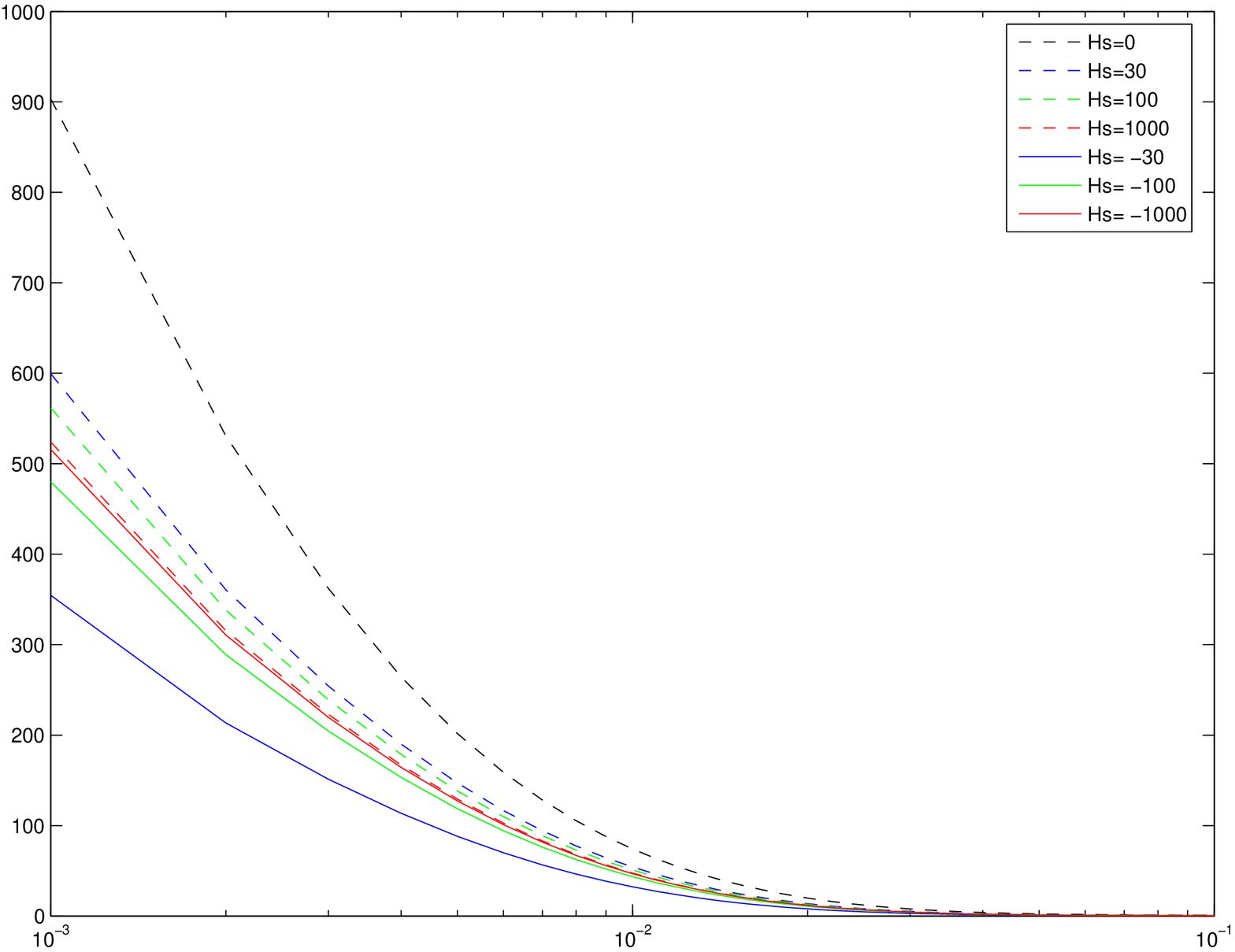}
\caption{Plot of $\log t\mapsto\cE(t)$}\label{fig:Etotal}
\end{figure}
\begin{remark}
By the time this paper was written up, we learnt that Ba\v
nas, Page and Praetorius \cite{BanPagPra12}
independently solved a similar problem. They also used
a linear scheme similar to our scheme, even
though their variational formulation was different.
\end{remark}


\begin{thebibliography}{10}

\bibitem{Alo08}
F.~Alouges.
\newblock A new finite element scheme for {L}andau-{L}ifchitz equations.
\newblock {\em Discrete Contin. Dyn. Syst. Ser. S}, 1(2):187--196, 2008.

\bibitem{BBP08}
L.~Ba{\v{n}}as, S.~Bartels, and A.~Prohl.
\newblock A convergent implicit finite element discretization of the
  {M}axwell--{L}andau--{L}ifshitz--{G}ilbert equation.
\newblock {\em SIAM J. Numer. Anal.}, 46(3):1399--1422, 2008.

\bibitem{Bart05}
S.~Bartels.
\newblock Stability and convergence of finite-element approximation schemes for
  harmonic maps.
\newblock {\em SIAM J. Numer. Anal.}, 43(1):220--238 (electronic), 2005.

\bibitem{BanPagPra12}
L.~Ba\v{n}as, M.~Page, and D.~Praetorius.
\newblock A convergent linear finite element scheme for the
  {M}axwell--{L}andau--{L}ifshitz--{G}ilbert equation.
\newblock {\em Submitted}, 2012.

\bibitem{Braess07}
D.~Braess.
\newblock {\em Finite Elements}.
\newblock Cambridge University Press, Cambridge, third edition, 2007.
\newblock Theory, fast solvers, and applications in elasticity theory,
  Translated from the German by Larry L. Schumaker.

\bibitem{Chen05}
Z.~Chen.
\newblock {\em Finite Element Methods and Their Applications}.
\newblock Scientific Computation. Springer-Verlag, Berlin, 2005.

\bibitem{CimError07}
I.~Cimr{\'a}k.
\newblock Error analysis of a numerical scheme for 3{D}
  {M}axwell--{L}andau--{L}ifshitz system.
\newblock {\em Math. Methods Appl. Sci.}, 30(14):1667--1683, 2007.

\bibitem{CimExist07}
I.~Cimr{\'a}k.
\newblock Existence, regularity and local uniqueness of the solutions to the
  {M}axwell--{L}andau--{L}ifshitz system in three dimensions.
\newblock {\em J. Math. Anal. Appl.}, 329(2):1080--1093, 2007.

\bibitem{Gil55}
T.~Gilbert.
\newblock A {L}agrangian formulation of the gyromagnetic equation of the
  magnetic field.
\newblock {\em Phys Rev}, 100:1243--1255, 1955.

\bibitem{Boling08}
B.~Guo and S.~Ding.
\newblock {\em Landau--{L}ifshitz {E}quations}, volume~1 of {\em Frontiers of
  Research with the Chinese Academy of Sciences}.
\newblock World Scientific Publishing Co. Pte. Ltd., Hackensack, NJ, 2008.

\bibitem{Johnson87}
C.~Johnson.
\newblock {\em Numerical Solution of Partial Differential Equations by the
  Finite Element Method}.
\newblock Cambridge University Press, Cambridge, 1987.

\bibitem{LL35}
L.~Landau and E.~Lifschitz.
\newblock On the theory of the dispersion of magnetic permeability in
  ferromagnetic bodies.
\newblock {\em Phys Z Sowjetunion}, 8:153--168, 1935.

\bibitem{LeTran_2012}
K.-N. Le and T.~Tran.
\newblock A finite element approximation for the quasi-static
  {M}axwell--{L}andau--{L}ifshitz--{G}ilbert equations.
\newblock {\em Submitted ANZIAM J.} 2012.

\bibitem{Monk03}
P.~Monk.
\newblock {\em Finite {E}lement {M}ethods for {M}axwell's equations}.
\newblock Numerical Mathematics and Scientific Computation. Oxford University
  Press, New York, 2003.

\end{thebibliography}
\def\cprime{$'$}

\end{document}